\newtheorem{theorem}{Theorem}
\newtheorem{corollary}[theorem]{Corollary}
\newtheorem{definition}{Definition}
\newtheorem{remark}{Remark}
\newtheorem{lemma}{Lemma}
\newtheorem{claim}{Claim}
\numberwithin{equation}{section}
\newenvironment{proof}{\smallskip\noindent\emph{Proof.}\hspace{1pt}}%
{\hspace{-5pt}{\nobreak\quad\nobreak\hfill\nobreak$\square$\vspace{8pt}%
\par}\smallskip\goodbreak}
\newcommand{\be}{\begin{equation}}
\newcommand{\ee}{\end{equation}}
\newcommand{\supp}{\operatorname{supp}}
\newcommand{\snr}[1]{\lvert #1\rvert}
\newcommand{\nr}[1]{\lVert #1 \rVert}
\newcommand{\sh}{\tau_{hs}}
\newcommand{\hs}{\tau_{-hs}}
\newcommand{\ep}{\sh^{+}}
\newcommand{\eme}{\hs^{-}}
\newcommand{\RN}{\mathbb{R}^{N}}
\newcommand{\SN}{\mathbb{S}^{N-1}}
\newcommand{\N}{\mathbb{N}}
\newcommand{\X}{\xi}
\newcommand{\F}{\varphi}
\newcommand{\vn}{v_{j}}
\newcommand{\dd}{\mathrm{D}}
\definecolor{ffqqqq}{rgb}{1.,0.,0.}
\definecolor{uuuuuu}{rgb}{0.26666666666666666,0.26666666666666666,0.26666666666666666}
\begin{document}



\title{Higher Integrability for Constrained Minimizers of Integral Functionals with (p,q)-Growth in low dimension}

\author{
\textsc{Cristiana De Filippis\thanks{Mathematical Institute, University of Oxford, Andrew Wiles Building, Radcliffe Observatory Quarter, Woodstock Road, Oxford, OX26GG, Oxford, United Kingdom. E-mail:
      \texttt{Cristiana.DeFilippis@maths.ox.ac.uk}}}\\
}

\date{
}

\maketitle
\thispagestyle{plain}

\vspace{-0.6cm}
\begin{abstract}
  \vspace{5pt}
We prove higher summability for the gradient of minimizers of strongly convex integral functionals of the Calculus of Variations
\begin{flalign*}
\int_{\Omega}f(x,\mathrm{D}u(x)) \ dx, \quad u\colon \Omega \subset \mathbb{R}^{n}\to \SN,
\end{flalign*}
with growth conditions of $(p,q)$-type:
\begin{flalign*}
\snr{\xi}^{p}\le f(x,\X)\le C(\snr{\xi}^{q}+1), \quad p<q,
\end{flalign*}
in low dimension. Our procedure is set in the framework of Fractional Sobolev Spaces and renders the desired regularity as the result of an approximation technique relying on estimates obtained through a careful use of difference quotients.
\\\\
\textbf{Key words:} Constrained Minimizers; $(p,q)$-Growth; Integral Functionals; Finite Difference Operators; Difference Quotients; Fractional Sobolev Spaces.
\end{abstract}

\vspace{0.2cm}

\setlength{\voffset}{-0in} \setlength{\textheight}{0.9\textheight}

\setcounter{page}{1} \setcounter{equation}{0}


\section*{Introduction}
We deal with the regularity properties of constrained minimizers of non-autonomous integral functionals of the Calculus of Variations
\begin{flalign}\label{prob}
\mathcal{F}(u, \Omega)=\int_{\Omega}f(x,\mathrm{D}u(x)) \ dx, \quad u\colon \Omega \subset \mathbb{R}^{n}\to \SN,
\end{flalign}
where $\Omega$ is a bounded open subset in $\mathbb{R}^{n}$ and the integrand $f$ satisfies the $(p,q)$ growth condition
\begin{flalign}\label{i1}
\snr{\xi}^{p}\le f(x,\xi)\le C\left(1+\snr{\xi}^{q}\right),
\end{flalign}
$x\in \Omega$,  $\xi \in \mathbb{R}^{N\times n}$, $2\le p<q$, and $2\le n <N $.\\
A wide literature is devoted to the study of the regularity properties of unconstrained minimizers of \eqref{prob}: both the cases $p=q$ and $p<q$ are covered with a remarkable level of generality: see, among others, \cite{b1, b12, b13, b19, b20, b21} and \cite{b43, b8, b9, b10, b14, b15, b45, b29, b30, b31} respectively.\\ 
In the framework of $(p,q)$-growth, the question of regularity is particularly delicate already in the unconstrained case, since one has to take into account not only of the vectorial nature of the problem, \cite{b11, b22, b24, b48, b35, b36, b40, b41}, but also the ratio $q/p$, which can not be too large \cite{b46, b18, b47, b29, b30, b32, b33}. When a geometric constraint comes into play, the situation becomes correspondingly much more involved and, even in the simplest cases, minimizers may be singular, \cite{b26, b27}.\\
The case of constrained minimizers is very well understood for functionals with natural growth, that is $p=q$. If the target manifold exihibits a relatively simple topology, it is possible to recover the majority of the techniques available for the unconstrained case and suitably modify them in order to deal with the complications mostly due to the curvature of the constraint. In this perspective, it is worth recalling the classical \cite{b37}, where is partial regularity proved for minimizers of functionals of the type "energy plus lower order terms" defined on maps between Riemannian manifolds. Their procedure heavily relies on the substantial energy structure of the integrand, which turns out to be fundamental for deriving a scaling inequality needed for the construction of smooth maps approximating a minimizer and satisfying the constraint condition. The regularity theory is then obtained by showing that the energy has the right decay to obtain, by Morrey's Lemma, that solutions are locally H\"older continuous. We also mention \cite{b53}, which treats a general class of functionals with nice blow-ups. The novelty in his approach mainly consists in the introduction of a construction of comparison maps involving retractions in Euclidean space. This, in the case of the Dirichlet integral, simplifies the proof in \cite{b37}. A more general situation is considered in \cite{b24, b25}, where partial regularity for minimizers of functionals with $p$-growth is proved provided the target manifold is simply $[p]-1$ connected. Gehring-Giaquinta-Modica Lemma is then exploited to show higher integrability for the gradient and, as a consequence, H\"older continuity of minimizers outside a relatively closed set of zero $(n-p)$-dimensional Hausdorff measure.\\
Recently, analogous results have been obtained in \cite{b28} for holonomic minimizers of quasiconvex functionals with $p$-growth: here the question of partial regularity is handled by showing approximate harmonicity for local minimizers. This permits to compare them to solutions of bounded, linear elliptic systems with constant (frozen) coefficients to derive estimates which show that the Excess Function has the right decay. As a result, solutions have H\"older continuous gradient away from a relatively closed set of zero Lebesgue measure.\\
To the best of our knowledge, there are no such results for constrained minimizers of functionals with $(p,q)$-growth, which are interesting nevertheless, since they are related to several models used in particle physics, \cite{b16, b39}.\\
As a first step towards the developing of a complete regularity theory for solutions of this class of problems, we consider the simplified case of certain non-autonomous, strongly convex functionals with $(p,q)$-growth in low dimension, i.e. $n<p<q<N-1$. We do not use this assumption to exploit the continuity given by Morrey's Theorem for "removing" the constraint by introducing local coordinates. This hypotheses will only have a role when dealing with the critical term due to the curvature of the sphere appearing in the Euler-Lagrange equation. This combined with the fact that minimizers are, a priori, only $W^{1,p}_{\mathrm{loc}}$-functions, constitutes the major obstacle to overcome. \\
Our main result in this setting can be summarized by the following claim.
\begin{claim}
If $u \in W^{1,p}_{\mathrm{loc}}(\Omega, \SN)$ is any local minimizer of \eqref{prob}, then $\dd u \in L^{q}_{\mathrm{loc}}(\Omega, \mathbb{R}^{N\times n})$.
\end{claim}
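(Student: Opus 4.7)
The plan is to establish the higher integrability by an approximation-and-compactness argument: first relax the sphere constraint by a Ginzburg--Landau penalty and superimpose a viscosity regularization, then run a difference-quotient estimate on the Euler--Lagrange equation of the approximating minimizers to gain uniform control in a fractional Sobolev space, and finally convert this into $L^{q}$-integrability via the fractional embedding, which is available precisely because of the low-dimension hypothesis.

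More concretely, on a ball $\br\Subset\Omega$ I would introduce the relaxed functional $\fn{v}$, whose quartic penalty $\frac{n^{2}}{4}(|v|^{2}-1)^{2}$ enforces the constraint in the limit $n\to\infty$ and whose $L^{p}$-attachment term pins the solution to the given local minimizer $u$; I would then add a higher-order viscous layer to obtain $\fne{v}$ whose minimizers $\une$ exist by the direct method and belong to $W^{2,2}_{\mathrm{loc}}$, so that the Euler--Lagrange system holds classically. A standard truncation argument gives the pointwise bound $\snr{\une}\le 1$, and comparing $\une$ to $u$ yields the uniform control $\int_{\br} \frac{n^{2}}{4}(\snr{\une}^{2}-1)^{2}\le C$. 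Passing $\tilde\varepsilon\downarrow 0$ and then $n\to\infty$ and exploiting lower semicontinuity, one recovers $u$ in $W^{1,p}_{\mathrm{loc}}$.

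For fixed $n,\tilde\varepsilon$, I would differentiate the Euler--Lagrange equation for $\une$ by testing it with $\hs(\eta^{2}\sh\une)$, $\eta$ a cutoff on $\br$. After integration by parts the $p$-strong convexity of $F$ controls $\int\eta^{2}\snr{\sh\mathrm{D}\une}^{p}\,dx$ by $L^{p}$-norms of $\sh\une$ together with the discrete derivatives of the two penalty contributions. Dividing by $\snr{h}^{\alpha p}$, integrating in $h$, and using the standard difference-quotient characterization of $W^{\alpha,p}$ gives a bound on $\mathrm{D}\une$ in $W^{\alpha,p}_{\mathrm{loc}}(\br)$ that is uniform in $n$ and $\tilde\varepsilon$. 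Since $n<p$, for $\alpha$ close enough to $n/p$ the embedding $W^{\alpha,p}\hookrightarrow L^{np/(n-\alpha p)}_{\mathrm{loc}}$ covers any prescribed exponent, in particular $q$; lower semicontinuity along the two limits then transfers the estimate to $u$ and yields $\mathrm{D} u\in L^{q}_{\mathrm{loc}}(\Omega,\mathbb{R}^{N\times n})$.

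The main obstacle will be the curvature-like term $n^{2}(\snr{\une}^{2}-1)\une$, whose discrete derivative carries a factor $n^{2}$ that does not a priori vanish in the limit. Absorbing it requires combining the pointwise bound $\snr{\une}\le 1$, the integral bound $\int\frac{n^{2}}{4}(\snr{\une}^{2}-1)^{2}\le C$ from minimality, and a Sobolev-type manipulation that exploits the full strength of the inequalities $n<p<q<N-1$: the target condition $q<N-1$ provides the algebraic room needed to tame the nonlinear tangency correction to the sphere, while $n<p$ furnishes the correct exponents on the domain side through the fractional embedding. Once this term is absorbed in the coercive left-hand side, the remaining contributions slot into the standard $(p,q)$-difference-quotient scheme, and the claim follows.
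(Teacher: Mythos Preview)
Your Ginzburg--Landau relaxation is a different route from the paper's, and the obstacle you flag is real and, as stated, unresolved. When you test the Euler--Lagrange equation with $\hs(\eta^{2}\sh\une)$, the penalty contributes a term of the form
\[
n^{2}\int \sh\bigl[(\snr{\une}^{2}-1)\une\bigr]\cdot \eta^{2}\sh\une\,dx,
\]
and neither the pointwise bound $\snr{\une}\le 1$ nor the energy bound $\int n^{2}(\snr{\une}^{2}-1)^{2}\le C$ is enough to absorb this uniformly in $n$: you would need a quantitative smallness of $\snr{\une}^{2}-1$ in $L^{\infty}$ or a uniform Lipschitz bound on $\une$, and you have neither. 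Your appeal to $q<N-1$ is a non sequitur here---that inequality is a \emph{target}-side condition (in the paper it guarantees $\int_{B^{N}_{1/2}}\snr{v-a}^{-q}\,da<\infty$, i.e.\ the sphere-projection construction of Lemma~\ref{L12}) and has no leverage on the domain-side penalty term. Likewise, your fractional embedding $W^{\alpha,p}\hookrightarrow L^{np/(n-\alpha p)}$ is miscalibrated: under the hypothesis $n<p$ one has $\alpha p>n$ for $\alpha$ near $1$, so the stated Sobolev exponent is not the mechanism.

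The paper sidesteps the diverging-parameter problem entirely by \emph{keeping} the constraint. It first builds (Corollary~\ref{C4}) an $\SN$-valued sequence $\bar v_{j}\in W^{1,q}$ approximating $u$ in energy, then minimizes $\int f(x,\dd v)+\frac1p\snr{v-u}^{p}+\varepsilon_{j}(1+\snr{\dd v}^{2})^{q/2}$ over $W^{1,q}_{\bar v_{j}}(B_{R},\SN)$. The minimizers $v_{j}$ are sphere-valued, and the Euler--Lagrange equation \eqref{EL} has no penalty term---only projection matrices $\mathrm{Id}-v_{j}\otimes v_{j}$. The decisive use of $n<p$ is then \emph{Morrey's theorem}: $v_{j}\in C^{0,\alpha}$ uniformly in $j$, with $\alpha=\min(\sigma,1-n/p)$, so every constraint-induced term in the difference-quotient computation picks up a factor $\snr{\sh v_{j}}\le cm^{1/p}\snr{h}^{\alpha}$. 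This yields directly
\[
\int\eta^{2}\snr{\sh\bigl((\mu^{2}+\snr{\dd v_{j}}^{2})^{(p-2)/4}\dd v_{j}\bigr)}^{2}\,dx\le \frac{c\,m^{1/p}\snr{h}^{\alpha}}{d-r}\int_{B_{d}}(1+\snr{\dd v_{j}}^{q})\,dx,
\]
after which Lemma~\ref{L2} and an interpolation--iteration close the estimate. If you want to rescue your scheme, the clean fix is to replace the Ginzburg--Landau layer by the paper's constrained perturbation; otherwise you must explain concretely how the $n^{2}$ factor is neutralized, and the ingredients you list do not do it.
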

For notation and assumptions, we refer the reader to Section \ref{S1}.\\
Let us spend a few words on our procedure. It can be framed into two steps. \\
\textbf{Step 1.} We focus on some classes of functionals with $(p,q)$-growth, not affected by the Lavrentiev Phenomenon in the unconstrained case, \cite{b6, b7, b15}, and show how to manipulate this condition in such a way that it takes into account also the presence of the constraint. In other words, we show how to transform any sequence $(u_{j})_{j \in \N}\in W^{1,q}_{\mathrm{loc}}(\Omega, \RN)$ such that
\begin{flalign*}
u_{j} \to _{j\nearrow \infty}u \ \mathrm{in} \ W^{1,p}_{\mathrm{loc}}(\Omega, \RN) \quad \mathrm{and} \quad \mathcal{F}(u_{j}, \omega )\to_{j\nearrow \infty}\mathcal{F}(u, \omega ) \ \mathrm{for \ any \ } \omega \subset \subset \Omega,
\end{flalign*}
into a sequence $(\bar{v}_{j})_{j \in \N}\subset W^{1,q}_{\mathrm{loc}}(\Omega, \SN)$ satisfying:
\begin{flalign*}
\bar{v}_{j} \to _{j\nearrow \infty}u \ \mathrm{in} \ W^{1,p}_{\mathrm{loc}}(\Omega, \SN) \quad \mathrm{and} \quad \mathcal{F}(\bar{v}_{j}, \omega )\to_{j\nearrow \infty}\mathcal{F}(u, \omega ) \ \mathrm{for \ any \ } \omega \subset \subset \Omega.
\end{flalign*}

\textbf{Step 2.} We employ the approximating sequence obtained in Step 1 to construct a family of perturbed, constrained problems as to produce a sequence of maps $(v_{j})_{j \in \N}$ with values in the unit sphere together with uniform estimates which will permit to transfer the $q$-regularity from $(v_{j})_{j \in \N}$ to $u$. In this perspective, we would like to point out an important detail: since the competitors take values in the sphere, in general we can not expect uniqueness, so we need a way to address the approximating sequence, which a priori can converge to any local minimizer of the original problem, to the "initial" one.\\
Once defined suitable, more regular, perturbed functionals, we derive the associated Euler-Lagrange equation and exploit Morrey's embedding Theorem to get uniform estimates on $(v_{j})_{j \in \N}$ by reading H\"older continuity as fractional differentiability in Fractional Sobolev spaces. Finally, we pass to the limit in $j$ to transfer the regularity of the $v_{j}$'s to $u$.\\ 
Our technique mainly relies on the one developed in \cite{b15}, to prove the equivalence between the absence of Lavrentiev Phenomenon and the extra regularity of the minimizers for unconstrained, non-autonomous variational problems.\\


\section{Notations, Main Assumptions and Functional Setting}\label{S1}
Let $\Omega\subset\mathbb{R}^{n}$ be an open, bounded subset, $n\ge 2$, $N\ge 4$, $f\colon\Omega\times\mathbb{R}^{N\times n}\to \mathbb{R}$, Carath\'eodory function.\\
We study the regularity properties of constrained minimizers of integral functionals
\begin{flalign}\label{i0}
\mathcal{F}(u,\Omega)=\int_{\Omega}f(x,\dd u(x)) \ dx, \quad u\colon \Omega \mapsto \SN.
\end{flalign}
We assume that the integrand $f$ satisfies
\begin{flalign*}
&\mathrm{(H1)}\quad \zeta \mapsto f(x,\zeta) \in C^{1}( \mathbb{R}^{N\times n}) \ \mathrm{for \ every \ } x\in \Omega, \\
&\mathrm{(H2)} \quad \snr{\zeta }^{p}\le f(x,\zeta)\le C(\snr{\zeta}^{q}+1), \quad 1\le C<\infty,\\
&\mathrm{(H3)} \quad C^{-1}(\mu^{2}+\snr{\zeta_{1}}^{2}+\snr{\zeta_{2}}^{2})^{\frac{p-2}{2}}\snr{\zeta_{1}-\zeta_{2}}\le (f'(x,\zeta_{1})-f'(x,\zeta_{2}),\zeta_{1}-\zeta_{2}), \quad 0\le \mu \le 1, \\
&\mathrm{(H4)} \quad \snr{f'(x_{1},\zeta)-f'(x_{2},\zeta)}\le C\snr{x_{1}-x_{2}}^{\sigma}(1+\snr{\zeta}^{q-1}), \quad 0<\sigma \le 1, \\
\end{flalign*}
for any $x, x_{1}, x_{2}\in \Omega$, $\zeta, \zeta_{1}, \zeta_{2}\in \mathbb{R}^{N\times n}$. The exponents $p,q$ are such that
\begin{flalign}\label{I3}
&n<p<q<N-1, \\ 
&\frac{q}{p}<\frac{n+\alpha}{n},\label{I2}
\end{flalign}
where $\alpha =\min(\sigma, \kappa)$, and $\kappa=\left(1-\frac{n}{p}\right)$ is the H\"older exponent provided by Morrey's Theorem. Moreover, by $f'(x,\zeta)$ we mean the first derivarive of $f$ in the $\zeta$ variable, i.e.: $f'(x,\zeta)=D_{\zeta}f(x,\zeta)$.\\
We shall focus on three classes of integral functionals, which clearly can be traced back to the general form \eqref{i0}:
\begin{flalign*}
(\mathcal{A})\ \  \mathcal{F}(u, \Omega)=\int_{\Omega}&g(\mathrm{D}u(x)) \ dx, \quad (\mathcal{B})\ \  \mathcal{F}(u,\Omega)=\int_{\Omega}a(x)g(\dd u(x)) \ dx, \quad (\mathcal{C}) \ \  \mathcal{F}(u, \Omega)=\int_{\Omega}g(x,\dd u(x)) \ dx.
\end{flalign*}
The integrand in $(\mathcal{A})$ covers the general autonomous case, for class $(\mathcal{B})$ we shall assume that
\begin{flalign*}
a \in C^{0,\sigma}(\Omega), \quad \sigma \in (0,1], \quad 1\le a(x)\le C, \quad \snr{\zeta}^{p}\le g(\zeta)\le C(\snr{\zeta}^{q}+1),
\end{flalign*}
and the integrand in $(\mathcal{C})$ is such that
\begin{flalign}\label{el40}
\sum_{i=1}^{4}(c_{i}^{-1}f_{i}(x,\zeta)-c_{i})\le g(x,\zeta)\le \sum_{i=1}^{4}c_{i}(f_{i}(x,\zeta)+1), \quad 1\le c_{i}<\infty,
\end{flalign}
and the $f_{i}$'s are the densities of the model energies
\begin{flalign*}
(\mathrm{E_{1}})\quad &\int_{\Omega}f_{1}(x,\dd u(x)) \ dx =\int_{\Omega}\sum_{k=1}^{n}a_{k}(x)\snr{\dd _{k}u}^{p_{k}} \ dx,\\ 
&a_{k}\in C^{0,\sigma}(\Omega), \quad 1\le a_{k}(x)\le C, \quad 2\le p=p_{1}\le p_{2}\le \cdots \le p_{n}=q,\\
(\mathrm{E_{2}})\quad &\int_{\Omega}f_{2}(x, \dd u(x)) \ dx=\int_{\Omega}\snr{\dd u}^{p}+a(x)\snr{\dd u}^{q} \ dx, \quad a \in C^{0,\sigma}(\Omega,[0,C]),\\
(\mathrm{E_{3}})\quad &\int_{\Omega}f_{3}(x,\dd u(x)) \ dx=\int_{\Omega}\snr{\dd u}^{p(x)} \ dx, \quad p \in C^{0,\sigma}(\Omega), \quad 1<p\le p(x)\le q-\varepsilon, \quad \varepsilon>0,\\
(\mathrm{E}_{4})\quad &\int_{\Omega}f_{4}(x,\dd u) \ dx=\int_{\Omega}\snr{\dd u}^{p(x)}h(\snr{\dd u}) \ dx \\ &1<p\le p(x)\le q-q_{1}-\varepsilon, \quad p\in C^{0,\sigma}(\Omega),\quad \varepsilon >0,\\
&1\le h(\snr{\zeta})\le C(1+\snr{\zeta}^{q_{1}}), \quad h \in C(\mathbb{R}^{+},[1,\infty)),\quad \mathrm{for \ any \ constant \ }p, \ \snr{\zeta}^{p}h(\snr{\zeta}) \ \mathrm{is \  convex},
 \end{flalign*}
with $\sigma \in (0,1]$.\\
For technical reasons, we shall also assume: 
\begin{flalign*}
\mathrm{(H5)}\quad &\mathrm{There \ exists} \ K>0 \  \mathrm{such \ that \ }  \int_{B^{N}_{1/2}(0)}f(x,\dd w^{a}) \ da \le K(f(x,\dd v)+1) \ \mathrm{for \ a. \ e. \ } x \in \Omega,\\ 
&\mathrm{whenever \ }w(x,a)=\snr{v(x)-a}^{-1}(v(x)-a), \mathrm{with \ }v\in W^{1,p}_{\mathrm{loc}}(\Omega) \ \mathrm{and} \ a \in B^{N}_{1/2}(0)\subset \RN.\\
&K \mathrm{\ depends \ only \ on \ }N, p, q \mathrm{\ and \ on \ the \ structure \ of \ } f, \ \mathrm{but \ not \ on \  }x, v, \mathrm{ \ or \ } a.
\end{flalign*}
\begin{remark}\label{R7}
\emph{Some comments on assumptions (H1)-(H5) are in order. First, notice that (H3) implies that }$\zeta \mapsto f(x,\zeta)$\emph{ is convex. This observation, combined with (H2) gives the existence of a positive }$c=c(n,N,q,C)$\emph{ such that, for any} $x \in \Omega$\emph{ and} $\zeta \in \mathbb{R}^{N\times n}$,
\begin{flalign}\label{el30}
\snr{f'(x,\zeta)}\le c(\snr{\zeta}^{q-1}+1).
\end{flalign}
\emph{Moreover, we do not need to assume }$f$\emph{ to be twice differentiable with respect to }$\zeta$\emph{. As a result, the strict (or degenerate) convexity of }$f$\emph{ is not formulated as}
\begin{flalign}\label{el31}
C^{-1}\snr{\zeta}^{p-2}\snr{\eta}^{2}\le f''(x,\zeta)\eta \otimes \eta\le C\snr{\zeta}^{q-2}\snr{\eta}^{2},
\end{flalign}
\emph{for any} $\zeta, \eta \in \mathbb{R}^{N\times n}$\emph{,} $x \in \Omega$\emph{. If} $f$ is $C^{2}$\emph{ in the} $\zeta$\emph{ variable, then (H4) implies the left-hand side of \eqref{el31} with a slightly different ellipticity constant, see \cite{b15, b30}.}\\
\emph{Hypotheses (H5) pays for the generality of} $f$\emph{.} \emph{In fact, as we will see in Section 3, provided some restrictions on }$h$\emph{ in (}$\mathrm{E}_{4}$\emph{), we do not need to assume it for functionals of class} $(\mathcal{C})$.
\end{remark}
Following \cite{b28}, we briefly discuss the definitions and main properties of the function spaces in which we set our problem.
\begin{definition}\label{D2}
The Sobolev space of mappings into $\SN\subset \RN$, is
\begin{flalign}\label{N11}
W^{s,p}(\Omega,\SN)=\left\{ \ u \in W^{s,p}( \Omega,\RN)\colon \ u(x) \in \SN \ \mathrm{for \ a.\ e.} \ x \in \Omega  \ \right\},
\end{flalign}
for $s\ge 1$, $1\le p\le \infty$.\\ 
The set \eqref{N11} inherits strong and weak topologies from $W^{s,p}(\Omega,\RN)$.\\
The local space $W^{s,p}_{\mathrm{loc}}(\Omega,\SN)$ consists of maps belonging to $W^{s,p}(\omega, \SN)$ for all the sets $\omega \subset \subset \Omega$.
\end{definition}

Owing to the $(p,q)$-growth behavior of $F$, we shall adopt the following notion of local minimizer.
\begin{definition}\label{D1}
A map $u\in W^{1,1}_{\mathrm{loc}}(\Omega,\SN)$ is a constrained local minimizer of $\mathcal{F}$ if and only if
\begin{flalign}\label{D11}
x\mapsto f(x,\mathrm{D}u(x))\in L^{1}_{\mathrm{loc}}(\Omega) \quad \mathrm{and} \quad \int_{\omega}f(x,\mathrm{D}u) \ dx \le \int_{\omega}f(x,\mathrm{D}v) \ dx,
\end{flalign}
for all $v \in W^{1,1}_{u}(\omega, \SN)$ and all open $\omega \subset \subset \Omega$,
where
\begin{flalign*}
&v \in W^{1,1}_{u}(\omega,\SN)=\left(u+W^{1,1}_{0}(\omega, \RN)\right)\cap W^{1,1}(\omega,\SN).\\
\end{flalign*}
\end{definition}
We stress that (H2) and $\eqref{D11}_{1}$ imply that $\mathrm{D}u \in L^{p}_{\mathrm{loc}}(\Omega,\mathbb{R}^{N\times n})$, so, if $\omega\subset \mathbb{R}^{n}$ is any set with Lipschitz boundary compactly contained in $\Omega$,
\begin{flalign*}
\inf_{v \in W^{1,1}_{u}(\omega,\mathbb{S}^{N-1})}\mathcal{F}(u,\omega)=\inf_{v \in W^{1,p}_{u}(\omega,\mathbb{S}^{N-1})}\mathcal{F}(u,\omega).
\end{flalign*}

However, in view of the $(p,q)$-growth behavior of the integrand we note that $\mathcal{F}$ is coercive on $W^{1,p}(\omega, \mathbb{R}^{N})$ while it is continuous on $W^{1,q}(\omega,\mathbb{R}^{N})$. As a consequence we are forced to consider also
\begin{flalign}\label{N10}
\inf_{v \in W^{1,q}_{u}(\omega,\mathbb{S}^{N-1})}\mathcal{F}(v,\omega)\ge \inf_{v \in W^{1,p}_{u}(\omega,\mathbb{S}^{N-1})}\mathcal{F}(v,\omega).
\end{flalign}
When strict inequality holds in \eqref{N10}, the Lavrentiev's phenomenon occurs, see \cite{b6, b7, b15}.\\
We are going to prove that 
\begin{flalign*}
\inf_{v \in W^{1,q}_{\bar{v}_{j}}(\omega,\SN)}\tilde{\mathcal{F}}_{j}(v,\omega)\to_{j \nearrow \infty} \mathcal{F}(u,\omega),
\end{flalign*}
where $(\bar{v}_{j})_{j \in \N}$ is a certain sequence converging to $u$ with respect to the $W^{1,p}$-norm and the $\tilde{\mathcal{F}}_{j}$'s are suitably perturbations of $\mathcal{F}$. This will render the local higher integrability for $\mathrm{D}u$.

\begin{remark}\label{R4}
\emph{The left-hand side in inequality \eqref{N10} raises an additional question on whether or not any minimizer} $u$\emph{, which a priori is just a }$W^{1,p}$\emph{-function, could be trace for a $W^{1,q}$-map. The answer is affirmative, since }$u(x) \in \SN$ \emph{almost everywhere, and a simple approximation argument together with embedding theorems for trace spaces prove that }
\begin{flalign*}
W^{1,p}(\partial \omega,\mathbb{S}^{N-1}) \hookrightarrow W^{1-1/q,q}(\partial \omega,\SN), 
\end{flalign*}
\emph{provided }$q<p+1$ \emph{and} $\partial \omega$ \emph{is sufficiently regular. According to \eqref{I2}, this last condition is matched.}
\end{remark}

\begin{remark}\label{R1}
\emph{When referring to balls, or, more generally, other subsets of }$\RN$\emph{ we will stress it with the apex} "${N}$" \emph{and the constant }$\omega_{N}$\emph{ will denote the} $(N-1)$\emph{-dimensional Hausdorff measure of} $\mathbb{S}^{N-1}$\emph{. Moreover, 
 if }$A\subset \mathbb{R}^{n}$\emph{ or }$A \subset \RN$\emph{ is any measurable set, we shall equally denote with }$\snr{A}$\emph{ its }$n$\emph{-dimensional Lebesgue measure or its} $N$\emph{-dimensional Lebesgue measure, the context will dissipate any confusion.}\\
\emph{Unless differently specified, }$c$\emph{ will denote a positive constant, not necessarily the same in any two occurrences, the relevant dependencies being emphasized. }
\end{remark}

\section{Known Results}
We list some well known results related to the embedding properties of functions belonging to fractional Sobolev spaces. Our main references for this section are \cite{b14, b15, b17, b21, b30}.\\
Let $h$ be any real number and $e_{s}$ the unit vector in the $s$ direction, $s \in \{1, \cdots, n\}$. For vector-valued maps $G\colon \mathbb{R}^{n}\to \RN $ we define the finite difference operators
\begin{flalign*}
\sh G(x)=G(x+he_{s})-G(x), \quad \hs G(x)=G(x)-G(x-he_{s}),
\end{flalign*}
and the left and right shift,
\begin{flalign*}
\ep G(x)=G(x+he_{s}), \quad \eme G(x)=G(x-he_{s}).
\end{flalign*}
The first lemmas explain how to control translations.
\begin{lemma}\label{L6}
For every $t$ with $1\le t<\infty$ there exists a positive constant $C$ such that
\begin{flalign*}
\int_{B_{R}(x_{0})}\left(1+\snr{G(x)}+\snr{\sh(G(x))}\right)^{t} \ dx \le C\int_{B_{2R}(x_{0})}\left(1+\snr{G(x)}\right)^{t} \ dx
\end{flalign*}
for every $G\in L^{t}(B_{2R}(x_{0}))$, for every $h$ with $\snr{h}<R$, for every $s\in \{1, \cdots, n\}$.
\end{lemma}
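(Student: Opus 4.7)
The plan is to reduce the estimate to a change-of-variable argument on the shift. First I would use the triangle inequality to bound pointwise
\[
1+\snr{G(x)}+\snr{\sh G(x)} \;=\; 1+\snr{G(x)}+\snr{G(x+he_{s})-G(x)}\;\le\; 1+2\snr{G(x)}+\snr{G(x+he_{s})}.
\]
Then, since $t\ge1$, the elementary inequality $(a_{1}+a_{2}+a_{3})^{t}\le 3^{t-1}(a_{1}^{t}+a_{2}^{t}+a_{3}^{t})$ yields a pointwise bound of the form
\[
\bigl(1+\snr{G(x)}+\snr{\sh G(x)}\bigr)^{t}\;\le\; c(t)\,\bigl(1+\snr{G(x)}^{t}+\snr{G(x+he_{s})}^{t}\bigr).
\]

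Next I would integrate both sides over $B_{R}(x_{0})$. The terms $1$ and $\snr{G(x)}^{t}$ integrate directly over $B_{R}(x_{0})\subset B_{2R}(x_{0})$ and are controlled by $\int_{B_{2R}(x_{0})}(1+\snr{G(x)})^{t}\,dx$ (using $\snr{G}^{t}\le(1+\snr{G})^{t}$ and that a bounded set has finite measure). For the translated term, the change of variables $y=x+he_{s}$ gives
\[
\int_{B_{R}(x_{0})}\snr{G(x+he_{s})}^{t}\,dx \;=\; \int_{B_{R}(x_{0})+he_{s}}\snr{G(y)}^{t}\,dy,
\]
and because $\snr{h}<R$ the translated ball satisfies $B_{R}(x_{0})+he_{s}\subset B_{2R}(x_{0})$, so this last integral is bounded by $\int_{B_{2R}(x_{0})}(1+\snr{G(y)})^{t}\,dy$.

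Combining the three contributions and absorbing the numerical constants into a single $C=C(t,n)$ yields the desired inequality. The only subtlety, which is harmless here, is to ensure that $G$ is extended to be defined on the slightly translated ball $B_{R}(x_{0})+he_{s}$: since we are given $G\in L^{t}(B_{2R}(x_{0}))$ and $B_{R}(x_{0})+he_{s}\subset B_{2R}(x_{0})$, this extension is automatic, and no new obstacle arises. The argument is essentially mechanical; the main thing to keep track of is simply that the shift remains inside $B_{2R}(x_{0})$ thanks to the constraint $\snr{h}<R$.
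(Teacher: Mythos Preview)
Your argument is correct and is the standard proof of this elementary estimate. Note that the paper does not actually supply a proof of this lemma: it is listed without proof in the ``Known Results'' section, with references to \cite{b14, b15, b17, b21, b30}, so there is no paper proof to compare against.
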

\begin{lemma}\label{L3}
For every $p>1$ and $G\colon B_{R}(x_{0}) \to \RN$ we have
\begin{flalign*}
\snr{\sh((\mu^{2}&+\snr{G(x)}^{2})^{(p-2)/4}G(x))}^{2}\le c(N,p)\left(\mu^{2}+\snr{G(x)}^{2}+\snr{\sh G(x)}^{2}\right)^{\frac{p-2}{2}}\snr{\sh G(x)}^{2},
\end{flalign*}
for every $x \in B_{\rho}(x_{0})$, with $\snr{h}<R-\rho$, and every $s \in \{1, \cdots, n\}$, where $0\le \mu \le 1$ and the constant $c=c(N,p)$ is independent of $\mu$.
\end{lemma}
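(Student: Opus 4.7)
The claim is pointwise in $x$: since $\snr{h} < R - \rho$ ensures $x + he_s \in B_R(x_0)$ whenever $x \in B_\rho(x_0)$, both $G(x)$ and $G(x+he_s)$ are defined in $\RN$. Writing $a := G(x)$, $b := G(x+he_s)$, and introducing the auxiliary field
\begin{equation*}
V_p(\xi) := (\mu^2 + \snr{\xi}^2)^{(p-2)/4}\xi, \qquad \xi \in \RN,
\end{equation*}
the lemma reduces to the purely algebraic estimate
\begin{equation*}
\snr{V_p(b) - V_p(a)}^2 \leq c(N,p)\bigl(\mu^2 + \snr{a}^2 + \snr{b-a}^2\bigr)^{(p-2)/2}\snr{b-a}^2,
\end{equation*}
which is the classical monotonicity inequality for $V_p$, uniform in $\mu \in [0,1]$.

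My plan is to derive this via a mean-value argument along the segment $\gamma(t) := a + t(b-a)$, $t \in [0,1]$. A direct computation gives
\begin{equation*}
\mathrm{D}V_p(\xi)\eta = (\mu^2+\snr{\xi}^2)^{(p-2)/4}\eta + \tfrac{p-2}{2}(\mu^2+\snr{\xi}^2)^{(p-6)/4}(\xi\cdot\eta)\xi,
\end{equation*}
and hence, using $\snr{\xi}^2 \leq \mu^2 + \snr{\xi}^2$, the pointwise operator bound $\snr{\mathrm{D}V_p(\xi)} \leq c(p)(\mu^2 + \snr{\xi}^2)^{(p-2)/4}$. Whenever the open segment $\gamma$ does not meet the origin (an exceptional configuration that matters only when $\mu = 0$), the fundamental theorem of calculus then yields
\begin{equation*}
\snr{V_p(b) - V_p(a)} \leq c(p)\snr{b-a}\int_0^1 (\mu^2 + \snr{\gamma(t)}^2)^{(p-2)/4}\,dt.
\end{equation*}

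It then remains to estimate the latter integral by $c(\mu^2 + \snr{a}^2 + \snr{b-a}^2)^{(p-2)/4}$. When $p \geq 2$ the integrand is nondecreasing in $\snr{\gamma(t)}$ and the bound $\snr{\gamma(t)}^2 \leq 2(\snr{a}^2 + \snr{b-a}^2)$ gives the conclusion at once. When $1 < p < 2$ the integrand is singular near the origin, so I would split into two subcases. If $\snr{b-a} \leq \snr{a}/2$, the segment stays inside $\{\snr{\xi} \geq \snr{a}/2\}$, so the integrand is controlled by $c(p)(\mu^2 + \snr{a}^2)^{(p-2)/4}$, which is then equivalent to $(\mu^2 + \snr{a}^2 + \snr{b-a}^2)^{(p-2)/4}$ by the subcase hypothesis. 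If $\snr{b-a} > \snr{a}/2$, then $\snr{a}^2 \leq 4\snr{b-a}^2$ and $\snr{b}^2 \leq 2\snr{a}^2 + 2\snr{b-a}^2 \leq 10\snr{b-a}^2$; here I would bypass $\mathrm{D}V_p$ altogether and estimate $\snr{V_p(b)-V_p(a)} \leq \snr{V_p(a)} + \snr{V_p(b)}$ directly, using $\snr{V_p(\xi)} = (\mu^2+\snr{\xi}^2)^{(p-2)/4}\snr{\xi}$ together with the fact that both terms are bounded by a constant multiple of $(\mu^2 + \snr{b-a}^2)^{p/2}$ in this regime.

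The principal obstacle is precisely the subquadratic range $1 < p < 2$ combined with the possibility $\mu = 0$: there $V_p$ fails to be $C^1$ at the origin, so the mean-value route breaks down on any segment meeting $0$. The two-subcase split above circumvents this by handling the problematic regime via a direct triangle inequality, and this is also what guarantees that the final constant $c(N,p)$ is genuinely independent of $\mu \in [0,1]$, as required.
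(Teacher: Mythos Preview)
The paper does not supply a proof of this lemma: it is listed in Section~2 (``Known Results'') and simply referred to the literature \cite{b14,b15,b17,b21,b30}. So there is no argument in the paper to compare your proposal against.

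Your approach is the standard one and is essentially correct. The reduction to the pointwise inequality for $V_p$ is right, the mean-value estimate with the bound $\snr{\mathrm{D}V_p(\xi)}\le c(p)(\mu^2+\snr{\xi}^2)^{(p-2)/4}$ is correct, and the case $p\ge 2$ as well as subcase~1 for $1<p<2$ are clean.

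There is, however, a small slip in your subcase~2 ($\snr{b-a}>\snr{a}/2$). You claim $\snr{V_p(a)}$ and $\snr{V_p(b)}$ are each bounded by $c(\mu^2+\snr{b-a}^2)^{p/2}$; even with the exponent corrected to $p/4$, this only yields $\snr{V_p(b)-V_p(a)}^2\le c(\mu^2+\snr{b-a}^2)^{p/2}$, which is \emph{not} controlled by $(\mu^2+\snr{b-a}^2)^{(p-2)/2}\snr{b-a}^2$ when $\mu$ is large compared to $\snr{b-a}$ (take $\mu=1$, $\snr{b-a}\to 0$). The fix is immediate: the scalar function $t\mapsto (\mu^2+t^2)^{(p-2)/4}t$ is increasing on $[0,\infty)$, so from $\snr{a}\le 2\snr{b-a}$ one gets directly
\[
\snr{V_p(a)}\le (\mu^2+4\snr{b-a}^2)^{(p-2)/4}\cdot 2\snr{b-a}\le 2(\mu^2+\snr{b-a}^2)^{(p-2)/4}\snr{b-a},
\]
and similarly for $\snr{V_p(b)}$ using $\snr{b}^2\le 10\snr{b-a}^2$. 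This gives the required bound with a constant independent of~$\mu$.
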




Then, we recall the fractional Sobolev embedding theorem localized on balls. It can be obtained from the original version, set in the whole $\mathbb{R}^{n}$, by an appropriate use of a cut-off function between the balls $B_{\rho}(x_{0})$ and $B_{R}(x_{0})$. The explicit dependence of the constants on the radii $\rho$ and $R$ is also stressed.

\begin{lemma}\label{L2}
If $G\colon \mathbb{R}^{n}\to \mathbb{R}^{N}$, $G \in L^{2}(B_{R}(x_{0}))$, $0 <R\le 1$ and for some $\rho\in (0, R)$, $d\in (0,1)$, $M>0$, $\eta\colon \mathbb{R}^{n}\to \mathbb{R}$ with $\eta\colon \mathbb{R}^{n}\to \mathbb{R}$ with $\eta \in C^{\infty}_{c}(B_{(\rho+R)/2}(x_{0}))$, $0\le \eta \le 1$ in $\mathbb{R}^{n}$, $\snr{\mathrm{D}\eta}\le 4/(R-\rho)$ in $\mathbb{R}^{n}$, $\eta \equiv 1$ on $B_{\rho}(x_{0})$,
\begin{flalign*}
\sum_{s=1}^{n}\int_{B_{R}(x_{0})}\snr{\sh G(x)}^{2}\eta^{2}(x) \ dx \le M^{2}\snr{h}^{2d}
\end{flalign*}
for every $h$ with $\snr{h}\le \frac{R-\rho}{4}$, then $G \in W^{b, 2}(B_{\rho}(x_{0}))\cap L^{\frac{2n}{n-2b}}(B_{\rho}(x_{0}))$ for every $b\in (0,d)$ and
\begin{flalign*}
\nr{G}_{L^{\frac{2n}{n-2b}}(B_{\rho}(x_{0}))}\le \frac{c}{(R-\rho)^{2b+2d+2}}\left(M+\nr{G}_{L^{2}(B_{R}(x_{0}))}\right),
\end{flalign*}
where $c\equiv c(n,N,b,d)$.
\end{lemma}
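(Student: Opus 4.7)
The plan is to use the cut-off $\eta$ to reduce to a global estimate on $\mathbb R^n$ and then invoke the classical fractional Sobolev embedding $W^{b,2}(\mathbb R^n)\hookrightarrow L^{2n/(n-2b)}(\mathbb R^n)$. Set $\tilde G:=\eta G$, extended by zero outside the support of $\eta$. Since $|\eta|\le 1$ the $L^2$-norm of $\tilde G$ on $\mathbb R^n$ is controlled by that of $G$ on $B_R(x_0)$, and since $\eta\equiv 1$ on $B_\rho(x_0)$, any $L^p$-estimate for $\tilde G$ on $\mathbb R^n$ restricts to the desired one for $G$ on $B_\rho(x_0)$.

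First I would prove the coordinate finite-difference estimate
\begin{equation*}
\sum_{s=1}^{n}\int_{\mathbb R^n}|\sh\tilde G(x)|^2\,dx\le C|h|^{2d}\Bigl(M^2+(R-\rho)^{-2}\|G\|_{L^2(B_R(x_0))}^2\Bigr), \qquad |h|\le\tfrac{R-\rho}{4},
\end{equation*}
by means of the discrete Leibniz rule $\sh(\eta G)(x)=\eta(x)\sh G(x)+\sh\eta(x)\,G(x+he_s)$, the hypothesis on $\sh G$, the mean-value bound $|\sh\eta|\le 4|h|/(R-\rho)$, and the elementary fact $|h|^2\le|h|^{2d}$ for $|h|\le 1$ and $d<1$. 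The support of $\eta$ and the smallness of $h$ ensure that $x+he_s$ stays inside $B_R(x_0)$ on the relevant set, so that only the $L^2$-norm of $G$ on $B_R(x_0)$ enters. For $|h|>(R-\rho)/4$ only the trivial $\|\sh\tilde G\|_{L^2}^2\le 4\|\tilde G\|_{L^2}^2$ is needed.

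Next I would convert these coordinate estimates into a bound on the Gagliardo seminorm $[\tilde G]_{W^{b,2}(\mathbb R^n)}$. Writing a general increment $\tilde G(x+v)-\tilde G(x)$ as a telescoping sum along the edges of the parallelepiped with sides $v_1e_1,\dots,v_ne_n$ and using Minkowski together with the translation-invariance of the $L^2$ norm, one arrives at $\|\tilde G(\cdot+v)-\tilde G(\cdot)\|_{L^2}^2\le n\sum_{s}\|\tilde G(\cdot+v_se_s)-\tilde G(\cdot)\|_{L^2}^2$. Integrating the Gagliardo kernel $|v|^{-n-2b}$ over the $(n-1)$ variables transverse to $e_s$ via the scaling $v'=|v_s|u'$ produces the factor $c(n,b)|v_s|^{-1-2b}$, hence
\begin{equation*}
[\tilde G]_{W^{b,2}(\mathbb R^n)}^2\le C(n,b)\sum_{s=1}^n\int_{\mathbb R}\frac{\|\sh\tilde G\|_{L^2(\mathbb R^n)}^2}{|h|^{1+2b}}\,dh.
\end{equation*}
Splitting this integral at $|h|=(R-\rho)/4$ and plugging in the two preceding estimates, the near part converges because $2d-2b>0$ (this is precisely where $b<d$ is used) and the far part because $2b>0$, yielding a bound of the shape $(R-\rho)^{-(b+d+1)}(M+\|G\|_{L^2(B_R)})$.

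Finally, the classical fractional Sobolev embedding on $\mathbb R^n$ gives $\|\tilde G\|_{L^{2n/(n-2b)}(\mathbb R^n)}\le C(n,b)[\tilde G]_{W^{b,2}(\mathbb R^n)}$; restricting to $B_\rho(x_0)$, where $\tilde G= G$, and absorbing the sharper exponent into the generous factor $(R-\rho)^{-(2b+2d+2)}$ announced in the statement gives the conclusion. The main technical delicacy is the careful book-keeping of the powers of $R-\rho$ when splitting the one-dimensional integrals above — the remaining ingredients (discrete Leibniz rule, Minkowski, the transverse $(n-1)$-integration, and the Sobolev embedding on $\mathbb R^n$) are entirely classical.
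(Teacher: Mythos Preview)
Your argument is correct and is exactly the approach the paper indicates: the lemma is quoted there as a known result, with the remark that it ``can be obtained from the original version, set in the whole $\mathbb{R}^{n}$, by an appropriate use of a cut-off function between the balls $B_{\rho}(x_{0})$ and $B_{R}(x_{0})$,'' which is precisely what you carry out in detail.
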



The next is an iteration type result.
\begin{lemma}\label{L4}
Let $h\colon [\rho, R_{0}]\to \mathbb{R}$ be a non-negative, bounded function and $0<\theta<1$, $0\le A$, $0<\beta$. Assume that
\begin{flalign*}
h(r)\le \frac{A}{(d-r)^{\beta}}+\theta h(d),
\end{flalign*}
for $\rho \le r<d\le R_{0}$. Then
\begin{flalign*}
h(\rho)\le \frac{cA}{(R_{0}-\rho)^{\beta}},
\end{flalign*}
where $c=c(\theta, \beta)>0$.
\end{lemma}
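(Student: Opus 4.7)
The plan is to run a standard geometric-iteration argument that converts the one-step estimate $h(r)\le A(d-r)^{-\beta}+\theta h(d)$ into a closed bound on $h(\rho)$. The key idea is to iterate the hypothesis along a sequence of radii $r_i\in[\rho,R_0]$ growing geometrically toward $R_0$, so that each use of the inequality costs a factor of $\theta$, while the correction term $(r_{i+1}-r_i)^{-\beta}$ blows up like $\tau^{-i\beta}$ for a contraction ratio $\tau\in(0,1)$. Choosing $\tau$ so that $\theta\tau^{-\beta}<1$ will then produce a convergent geometric series and allow the leftover tail $\theta^k h(r_k)$ to vanish in the limit.

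Concretely, I would fix $\tau\in(\theta^{1/\beta},1)$ and set $r_0=\rho$ and $r_{i+1}=r_i+(1-\tau)\tau^{i}(R_0-\rho)$, so that $r_{i+1}-r_i=(1-\tau)\tau^{i}(R_0-\rho)$ and $r_i\nearrow R_0$. Applying the hypothesis with $(r,d)=(r_i,r_{i+1})$ yields
\[
h(r_i)\le \frac{A}{(1-\tau)^{\beta}(R_0-\rho)^{\beta}}\,\tau^{-i\beta}+\theta\,h(r_{i+1}).
\]
Iterating this inequality $k$ times and collecting the geometric factors gives
\[
h(\rho)\le \frac{A}{(1-\tau)^{\beta}(R_0-\rho)^{\beta}}\sum_{i=0}^{k-1}\bigl(\theta\tau^{-\beta}\bigr)^{i}+\theta^{k}h(r_{k}).
\]

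Finally I would pass to the limit $k\to\infty$: since $h$ is bounded on $[\rho,R_0]$ and $\theta<1$, the remainder $\theta^{k}h(r_{k})$ tends to $0$; since $\theta\tau^{-\beta}<1$ by the choice of $\tau$, the geometric series is bounded by $(1-\theta\tau^{-\beta})^{-1}$. Combining these observations yields
\[
h(\rho)\le \frac{A}{(1-\tau)^{\beta}\bigl(1-\theta\tau^{-\beta}\bigr)}\cdot\frac{1}{(R_0-\rho)^{\beta}},
\]
which is the asserted bound with $c=c(\theta,\beta)=(1-\tau)^{-\beta}(1-\theta\tau^{-\beta})^{-1}$ (making, e.g., the concrete choice $\tau=\tfrac{1}{2}(1+\theta^{1/\beta})$ to render $c$ a function of $\theta,\beta$ only). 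I do not expect any real obstacle: the only subtlety is the calibration of $\tau$ strictly between $\theta^{1/\beta}$ and $1$, and the fact that the hypothesis that $h$ is bounded (not merely non-negative) is genuinely needed, as it is what guarantees that the residual $\theta^k h(r_k)$ disappears in the limit.
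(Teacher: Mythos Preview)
Your argument is correct and is exactly the standard geometric-iteration proof of this classical lemma (as found, e.g., in Giusti's or Giaquinta's monographs, which the paper cites). Note that the paper itself does not supply a proof: Lemma~\ref{L4} is merely quoted in the ``Known Results'' section, so there is nothing to compare against beyond observing that your proof is the textbook one.
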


\section{Higher Regularity for Solutions}
\subsection{Extension Results}
We start with a Lemma stating the non occurrence of the Lavrentiev Phenomenon, at least when considering $\RN$-valued approximating sequences. 
\begin{lemma}\label{L11}\emph{\cite{b15}}
Let $u \in W^{1,p}_{\mathrm{loc}}(\Omega, \SN)$ be a local minimizer of \eqref{i0}, where $\mathcal{F}$ belongs either to class $(\mathcal{A})$, $(\mathcal{B})$ or $(\mathcal{C})$.\\ There exists a sequence $(u_{j})_{j \in \N}\subset W^{1,q}_{\mathrm{loc}}(\Omega,\RN)$ such that 
\begin{flalign*}
&u_{j}\to_{j \nearrow \infty}u \ \mathrm{strongly \ in \ }W^{1,p}_{\mathrm{loc}}(\Omega, \RN), \\ &f(\cdot, \dd u_{j}(\cdot))\to_{j \nearrow \infty}f(\cdot, \dd u(\cdot)) \ \mathrm{in} \ L^{1}_{\mathrm{loc}}(\Omega).
\end{flalign*}
\end{lemma}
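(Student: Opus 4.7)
The plan is to produce the approximating sequence by standard mollification and to verify the two convergences separately in each of the three structural classes of $f$. Fix $\rho \in C_c^\infty(B_1(0))$ with $\int \rho =1$, set $\rho_\varepsilon(\cdot)=\varepsilon^{-n}\rho(\cdot/\varepsilon)$, and for each $\omega\subset\subset\Omega$ define $u_\varepsilon:=u*\rho_\varepsilon$ on $\omega$ for $\varepsilon$ smaller than the distance from $\omega$ to $\partial\Omega$. Then $u_\varepsilon \in C^\infty(\omega)\cap W^{1,q}(\omega,\RN)$; the standard properties of mollifiers give $u_\varepsilon \to u$ in $W^{1,p}(\omega,\RN)$ and, along some subsequence $\varepsilon_j\downarrow 0$, also $\dd u_{\varepsilon_j}\to \dd u$ almost everywhere on $\omega$. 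Continuity of $f(x,\cdot)$ from (H1) turns this into a.e.\ convergence $f(\cdot,\dd u_{\varepsilon_j})\to f(\cdot,\dd u)$, so by Vitali's theorem the whole statement reduces to proving equi-integrability of the family $\{f(\cdot,\dd u_{\varepsilon_j})\}$ on $\omega$. A global sequence $(u_j)$ on $\Omega$ is recovered from the local ones by a standard diagonal procedure over an exhaustion.

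For class $(\mathcal{A})$ the integrand is autonomous and convex (convexity being encoded in the monotonicity of $g'$ demanded by (H3)). Jensen's inequality then gives the pointwise dominant
\[
g(\dd u_\varepsilon(x))\le (\rho_\varepsilon * g(\dd u))(x),
\]
and since $g(\dd u)\in L^1_{\mathrm{loc}}$ by Definition \ref{D1}, the right-hand side converges in $L^1(\omega)$ and therefore is an equi-integrable envelope. For class $(\mathcal{B})$ I would refine this by exploiting H\"older continuity of $a$: splitting $a(x)=(a(x)-a(y))+a(y)$ inside the convolution and applying Jensen to $g$ yields
\[
a(x)g(\dd u_\varepsilon(x))\le (\rho_\varepsilon*(a\,g(\dd u)))(x) + C\varepsilon^\sigma (\rho_\varepsilon * g(\dd u))(x),
\]
where $\snr{a(x)-a(y)}\le [a]_{0,\sigma}\varepsilon^\sigma$ on $\supp\rho_\varepsilon(x-\cdot)$. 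The first summand converges in $L^1_{\mathrm{loc}}$ to $a\,g(\dd u)$, while the second vanishes there as $\varepsilon\to 0$, and equi-integrability follows.

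Class $(\mathcal{C})$ is the main technical obstacle. By the two-sided bound \eqref{el40} it suffices to prove $L^1_{\mathrm{loc}}$-convergence for each of the model densities $f_1,\dots,f_4$ separately. The anisotropic energy $(\mathrm{E}_1)$ and the double-phase energy $(\mathrm{E}_2)$ are handled term by term by the argument of class $(\mathcal{B})$. The variable-exponent energies $(\mathrm{E}_3)$ and $(\mathrm{E}_4)$ are the genuine difficulty, because the exponent $p(x)$ is frozen at the evaluation point while the convolution averages $y$ over points with different $p(y)$, so direct Jensen does not apply to $\snr{\dd u_\varepsilon(x)}^{p(x)}$. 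Following \cite{b15}, I would exploit $\nr{\rho_\varepsilon}_{L^\infty}\le C\varepsilon^{-n}$ together with $\snr{p(x)-p(y)}\le C\varepsilon^\sigma$ on $\supp\rho_\varepsilon(x-\cdot)$ to transfer the exponent drift into a factor of order $\varepsilon^{\sigma - n(q/p-1)}$; this vanishes exactly when $q/p<1+\sigma/n$, which is precisely \eqref{I2} (recalling $\alpha\le\sigma$). For $(\mathrm{E}_4)$ the hypothesis that $\snr{\xi}^{p}h(\snr{\xi})$ is convex at frozen $p$ lets me apply Jensen before confronting the $x$-dependence of $p(\cdot)$, after which the same exponent-trade produces equi-integrability. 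Transposing these estimates to the present manifold-target setting requires no modification, since the approximants $u_\varepsilon$ need not take values in $\SN$, and this yields the claim.
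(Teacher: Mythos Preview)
The paper does not supply its own proof of this lemma: it is simply quoted from \cite{b15}, where the result is established for unconstrained $\RN$-valued maps (the constraint plays no role here, as you correctly observe at the end). Your mollification-plus-Vitali scheme is precisely the approach of \cite{b15}, and the arguments you give for classes $(\mathcal{A})$ and $(\mathcal{B})$ are complete and correct.

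For class $(\mathcal{C})$ your reduction via \eqref{el40} is right: the upper bound $g\le\sum c_i(f_i+1)$ means equi-integrability of each $f_i(\cdot,\dd u_\varepsilon)$ yields equi-integrability of $g(\cdot,\dd u_\varepsilon)$, and Vitali closes the argument. The treatment of $(\mathrm{E}_3)$ and $(\mathrm{E}_4)$ is admittedly a sketch; the actual bound in \cite{b15} proceeds by freezing the exponent at its local infimum $p_-(x)=\inf_{B_\varepsilon(x)}p$, applying Jensen with that frozen exponent, and then controlling the residual factor $\snr{\dd u_\varepsilon(x)}^{p(x)-p_-(x)}$ using $\snr{\dd u_\varepsilon(x)}\le c\,\varepsilon^{-n/p}\nr{\dd u}_{L^p}$ together with $p(x)-p_-(x)\le[p]_{0,\sigma}\varepsilon^\sigma$. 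This makes rigorous the heuristic ``factor of order $\varepsilon^{\sigma-n(q/p-1)}$'' you describe, and the condition for it to stay bounded is indeed $q/p<1+\sigma/n$, which is implied by \eqref{I2} since $\alpha\le\sigma$. One minor remark: no diagonal argument is needed to globalize, since the single sequence $u_j=u*\rho_{1/j}$ is eventually defined on every $\omega\subset\subset\Omega$ and the convergences hold there.
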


Since the regularity we aim to prove is of local nature, from now on we shall work on a fixed ball $B_{R}(x_{0})\subset \subset \Omega$. Clearly all the forthcoming results will be valid on any ball compactly contained in $\Omega$.\\
In the next Lemma, we are going to show how to convert the sequence $(u_{j})_{j \in \N}$ obtained in Lemma \ref{L11} into a sequence $(\bar{v}_{j})_{j \in \N}\in W^{1,q}(B_{R}(x_{0}),\SN)$ such that $\bar{v}_{j}\to_{j \nearrow \infty}u$ strongly in $W^{1,p}(B_{R}(x_{0}), \SN)$ and $f(\cdot,\dd \bar{v}_{j}(\cdot))\to f(\cdot, \dd u(\cdot))$ in $L^{1}(B_{R}(x_{0}))$.\\
Our construction is inspired by \cite{b24, b34}.
\begin{lemma}\label{L12}
Let $B_{R}(x_{0})\subset \subset \Omega$ and $u\in W^{1,p}(B_{R}(x_{0}), \SN)$. For any $(u_{j})_{j \in \N}\subset W^{1,q}(B_{R}(x_{0}), \RN)$ such that 
\begin{itemize}
    \item[\textit{i}.]$u_{j}\to_{j \nearrow \infty} u$ strongly in $W^{1,p}(B_{R}(x_{0}), \RN)$,
    \item[\textit{ii}.]$f(\cdot,\dd u_{j}(\cdot))\to_{j \nearrow \infty}f(\cdot,\dd u(\cdot)) $ in $L^{1}(B_{R}(x_{0})),$
\end{itemize}
there exists a sequence $(\bar{v}_{j})_{j \in \N}\subset W^{1,q}(B_{R}(x_{0}),\SN)$ such that
\begin{itemize}
    \item $\bar{v}_{j}\to_{j \nearrow \infty}u \ \mathrm{strongly \ in \ }W^{1,p}(B_{R}(x_{0}), \SN)$,
    \item $f(\cdot,\dd \bar{v}_{j}(\cdot))\to_{j \nearrow \infty}f(\cdot,\dd u(\cdot)) \ \mathrm{in} \ L^{1}(B_{R}(x_{0}))$.
\end{itemize}
\end{lemma}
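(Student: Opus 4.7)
The plan is to post-compose $u_j$ with a one-parameter family of radial projections $\Phi_a\colon\RN\setminus\{a\}\to\SN$, $\Phi_a(y):=(y-a)/|y-a|$, parametrised by $a\in B^N_{1/2}(0)$, and then to select $a=a_j\to 0$ along a diagonal so that $\bar v_j:=\Phi_{a_j}\circ u_j$ enjoys the two desired convergence properties. The base point $a$ is introduced precisely so that the singularity of $y\mapsto y/|y|$ at the origin does not meet the range of $u_j$, while hypothesis (H5) furnishes the averaged energy control that allows one to treat all admissible $a$'s simultaneously.

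First I would set $w_j^a(x):=\Phi_a(u_j(x))$ for $a\in B^N_{1/2}(0)$. Because $p>n$ by \eqref{I3}, Morrey's embedding gives $u_j\to u$ in $C^{0,\alpha}(\overline{B_R(x_0)})$; combined with $|u|\equiv 1$, this yields $|u_j(x)-a|\ge 1/4$ uniformly in $x$ for every such $a$ and every $j$ large enough, so $w_j^a\in W^{1,q}(B_R(x_0),\SN)$ is well-defined. Applying (H5) with $v=u_j$ and integrating over $B_R(x_0)$, Fubini delivers
\[
\int_{B^N_{1/2}(0)}\int_{B_R(x_0)} f(x,\dd w_j^a)\,dx\,da \;\le\; K\int_{B_R(x_0)}\bigl(f(x,\dd u_j)+1\bigr)\,dx,
\]
and hypothesis \emph{ii} forces the right-hand side to converge as $j\to\infty$ to $K\int_{B_R(x_0)}(f(x,\dd u)+1)\,dx$, so the double integrals are bounded uniformly in $j$.

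Next I would establish, for any fixed $a$ of sufficiently small norm, the $j\to\infty$ continuity $w_j^a\to\Phi_a\circ u$ in $W^{1,p}(B_R(x_0),\SN)$ together with $f(\cdot,\dd w_j^a)\to f(\cdot,\dd(\Phi_a\circ u))$ in $L^1(B_R(x_0))$: the chain rule $\dd w_j^a=[\nabla\Phi_a(u_j)]\,\dd u_j$, the uniform lower bound on $|u_j-a|$, and $\dd u_j\to\dd u$ in $L^p$ give the first convergence, while the $L^1$-convergence of integrands is extracted from the equi-integrability of $\{f(\cdot,\dd u_j)\}$ implied by hypothesis \emph{ii}, combined with (H5) used as a domination tool. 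In parallel, as $a\to 0$, the constraint $|u|=1$ forces $u\cdot\dd u=0$ a.e., whence a direct computation shows $[\nabla\Phi_0(u)]\,\dd u=\dd u$ a.e.; dominated convergence based on (H2) and the pointwise bound $|\nabla\Phi_a(u)|\le C$ then yields $\Phi_a\circ u\to u$ in $W^{1,p}$ and $\int_{B_R(x_0)} f(x,\dd(\Phi_a\circ u))\,dx\to\int_{B_R(x_0)} f(x,\dd u)\,dx$.

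A diagonal extraction finally selects $a_j\to 0$ so that $\bar v_j:=w_j^{a_j}$ inherits all these limits, giving $\bar v_j\to u$ in $W^{1,p}(B_R(x_0),\SN)$ and $\int_{B_R(x_0)} f(x,\dd\bar v_j)\,dx\to\int_{B_R(x_0)} f(x,\dd u)\,dx$; the strong $L^1$-convergence $f(\cdot,\dd\bar v_j)\to f(\cdot,\dd u)$ then follows, given the pointwise convergence of the integrands and their non-negativity. The main technical obstacle I foresee is the $L^1$-convergence of $f(\cdot,\dd w_j^a)$ at fixed small $a$: the $(p,q)$-growth gap obstructs a pointwise domination of $f(x,\dd w_j^a)$ by a multiple of $f(x,\dd u_j)$, so it is precisely here that (H5) must be exploited carefully together with the assumption $f(\cdot,\dd u_j)\to f(\cdot,\dd u)$ in $L^1$ to compensate.
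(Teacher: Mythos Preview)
Your overall strategy---project $u_j$ onto $\SN$ via $\Phi_a(y)=(y-a)/|y-a|$ and then select the parameter $a$---is exactly the paper's. There are, however, two significant departures.

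First, you invoke Morrey's embedding ($p>n$) to secure $|u_j(x)-a|\ge 1/4$ uniformly. The paper deliberately avoids this (see the remark following the lemma): it works without \eqref{I3} by introducing the ``bad'' sets $K_j=\{|u_j|\le 3/4\}$, showing $|K_j|\to 0$ from $u_j\to u$ in $L^p$, and treating $K_j$ and its complement separately. Your route is legitimate under the paper's standing hypotheses and does streamline the $W^{1,p}$- and $W^{1,q}$-membership arguments, but the paper's argument buys a lemma that is valid without the low-dimension assumption.

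Second---and this is where your proposal has a real gap---your selection mechanism for $a$ does not go through. You aim to prove, for \emph{fixed} small $a$, that $f(\cdot,\dd w^a_j)\to f(\cdot,\dd(\Phi_a\circ u))$ in $L^1$, and then diagonalize with $a_j\to 0$. You correctly identify this step as the obstacle and propose to use (H5) ``as a domination tool''. But (H5) is an \emph{average} bound in $a$: it says nothing about $f(x,\dd w^a_j)$ for any particular $a$. And the $(p,q)$-gap genuinely blocks a pointwise estimate of the form $f(x,\nabla\Phi_a(u_j)\,\dd u_j)\le C\bigl(f(x,\dd u_j)+1\bigr)$: even with $|\nabla\Phi_a(u_j)|\le C_0$ uniformly, all you get is $f(x,\dd w^a_j)\le C(C_0^q|\dd u_j|^q+1)$, and equi-integrability of $f(\cdot,\dd u_j)$ only controls $|\dd u_j|^p$, not $|\dd u_j|^q$. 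So neither the $L^1$-convergence nor even the uniform $L^1$-boundedness of $f(\cdot,\dd w^a_j)$ at fixed $a$ follows from your ingredients, and the diagonal argument collapses.

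The paper resolves this by reversing the logic: rather than fixing $a$ and varying $j$, it fixes $j$ and uses (H5) \emph{pointwise in $x$} together with Markov's inequality in the $a$-variable to produce, for each $j$, an $a_j\in B^N_{1/2}(0)$ for which the \emph{pointwise} domination
\[
f(x,\dd w^{a_j}_j)\le C\bigl(f(x,\dd u_j)+1\bigr)\quad\text{for a.e. }x\in B_R(x_0)
\]
holds (with $C$ depending only on $N,p,q,K$). This $a_j$ is simultaneously chosen from the full-measure sets where $\int_{K_j}|\dd w^a_j|^p\to 0$ and $\int|\dd w^a_j|^q<\infty$. With $\bar v_j:=w^{a_j}_j$ the pointwise bound above feeds directly into the Dominated Convergence Theorem (using hypothesis \textit{ii}) to give $f(\cdot,\dd\bar v_j)\to f(\cdot,\dd u)$ in $L^1$, and the $K_j$-decomposition handles $\dd\bar v_j\to\dd u$ in $L^p$. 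No diagonalization is used, and $a_j\to 0$ is never required.
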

\begin{proof}
For the sake of clarity, we split this proof into two steps.\\
\textbf{Step 1.} - $(\bar{v}_{j})_{j \in \N}\subset W^{1,q}(B_{R}(x_{0}), \SN)$, $\dd \bar{v}_{j}\to_{j \nearrow \infty}\dd u$ strongly in $L^{p}(B_{R}(x_{0}),\mathbb{R}^{N\times n})$ and $f(\cdot,\dd\bar{ v}_{j}(\cdot))\to_{j \nearrow \infty}f(\cdot,\dd u(\cdot))$ in $L^{1}(B_{R}(x_{0}))$. \\
For $a \in B^{N}_{1/2}(0)\subset \RN$, define \begin{flalign*}w^{a}_{j}(x)=\frac{u_{j}(x)-a}{\snr{u_{j}(x)-a}}. 
\end{flalign*}
By construction, $w^{a}_{j}(x) \in \SN$ for a. e. $x \in B_{R}(x_{0})$. Moreover,
\begin{flalign*}
\dd w^{a}_{j}(x)=\snr{u_{j}(x)-a}^{-1}\dd u_{j}(x)-\snr{u_{j}(x)-a}^{-3}\left((u_{j}(x)-a)\otimes(u_{j}(x)-a)\right)\dd u_{j}(x),
\end{flalign*}
so
\begin{flalign*}
\snr{\dd w^{a}_{j}(x)}\le 2\snr{u_{j}(x)-a}^{-1}\snr{\dd u(x)},
\end{flalign*}
for a. e. $x \in B_{R}(x_{0})$, for all $a \in B^{N}_{1/2}(0)$ and
\begin{flalign}\label{101}
\int_{B^{N}_{1/2}(0)}\snr{\dd w^{a}_{j}}^{p} \ da \le C(N,p)\snr{\dd u_{j}}^{p}, \quad \int_{B^{N}_{1/2}(0)}\snr{\dd w^{a}_{j}}^{q} \ dx\le C(N,q)\snr{\dd u_{j}}^{q}.
\end{flalign}
Fubini-Tonelli's Theorem and \eqref{101} yield that
\begin{flalign*}
\int_{B^{N}_{1/2}(0)}\int_{B_{R}(x_{0})}\snr{\dd w^{a}_{j}}^{q} \ dadx\le \int_{B_{R}(x_{0})}\int_{B^{N}_{1/2}(0)}\snr{\dd w^{a}_{j}}^{q} \ dadx\le C(N,q)\int_{B_{R}(x_{0})}\snr{\dd u_{j}}^{q} \ dx<\infty,
\end{flalign*}
so the map $a \mapsto \int_{B_{R}(x_{0})}\snr{\dd w^{a}_{j}}^{q} \ dx$ is in $L^{1}(B^{N}_{1/2}(0))$, thus
\begin{flalign}\label{109}
\int_{B_{R}(x_{0})}\snr{\dd w^{a}_{j}}^{q} \ dx <\infty \quad \mathrm{for \ a. \ e. \ } a\in B^{N}_{1/2}(0).
\end{flalign}
Now consider the sets 
\begin{flalign*}
K_{j}=\left\{ x \in B_{R}(x_{0})\colon \snr{u_{j}(x)}\le \frac{3}{4} \right\}.
\end{flalign*}
Since by (\textit{i}.), $u_{j}\to_{j \nearrow \infty} u$ strongly in $L^{p}(B_{R}(x_{0}),\RN)$, then
\begin{flalign}\label{102}
\snr{K_{j}}\to_{j \nearrow \infty}0. 
\end{flalign}
By Fubini-Tonelli's Theorem and (\textit{i}.),
\begin{flalign}
&\int_{B^{N}_{1/2}(0)}\int_{K_{j}}\snr{\dd w^{a}_{j}}^{p} \ dx da=\int_{K_{j}}\int_{B^{N}_{1/2}(0)}\snr{\dd w^{a}_{j}}^{p} \ da dx\le C(N,p)\int_{K_{j}}\snr{\dd u_{j}}^{p} \ dx\to_{j \nearrow \infty}0,\label{104}
\end{flalign}
thus
\begin{flalign}\label{105}
&\int_{K_{j}}\snr{\dd w^{a}_{j}}^{p} \ dx \to_{j \nearrow \infty}0 \quad \mathrm{for \ a. \ e. \ }a \in B^{N}_{1/2}(0).
\end{flalign}
Now we need to show that for any $j \in \N$ there exists an $a_{j}\in B^{N}_{1/2}(0)$ such that $f(x,\dd w^{a_{j}}_{j})\le C(N,p,q)(f(x,\dd u_{j})+1)$ a. e. in $B_{R}(x_{0})$, where $C=N\omega_{N}^{-1}2^{N+1}K$ and $K$ is the constant given by (H5), and this is straightforward to obtain. In fact, using Markov's Inequality, (H5) and the definition of $C$, we get that
\begin{flalign*}
\left |\left \{ a \in B^{N}_{1/2}(0)\colon \ f(x,\dd w^{a}_{j})\le C(f(x, \dd u_{j})+1) \ \mathrm{for \ a. \ e. \ } x \in B_{R}(x_{0})  \right\} \right |>0.
\end{flalign*}
Summarizing, we can conclude that for any $j \in \N$, there exists an $a_{j} \in B^{N}_{1/2}(0)$ such that 
\begin{flalign}\label{106}
&\int_{K_{j}}\snr{\dd \bar{v}_{j}}^{p} \ dx \to_{j \nearrow \infty}0,\quad \int_{B_{R}(x_{0})}\snr{\dd \bar{v}_{j}}^{q} \ dx< \infty, \quad f(x,\dd \bar{v}_{j})\le C(f(x,\dd u_{j})+1) \  \mathrm{a. \ e. \ in \ }B_{R}(x_{0}).
\end{flalign}
with $C=C(N,p,q)>0$ and $\bar{v}_{j}=w^{a_{j}}_{j}$, so, in particular, $(\bar{v}_{j})_{j \in \N}\subset W^{1,q}(B_{R}(x_{0}), \SN)$.\\
Now, remembering the definition of the $K_{j}$'s, from  $\eqref{106}_{1}$ and the Dominated Convergence Theorem we obtain
\begin{flalign}\label{108}
\int_{B_{R}(x_{0})}\snr{\dd \bar{v}_{j}-\dd u}^{p} \ dx &=\int_{B_{R}(x_{0})\setminus K_{j}}\snr{\dd \bar{v}_{j}-\dd u}^{p} \ dx+\int_{K_{j}}\snr{\dd \bar{v}_{j}-\dd u}^{p} \ dx\nonumber\\
&\le 2^{p-1}\left(\int_{K_{j}}\snr{\dd \bar{v}_{j}}^{p} \ dx + \int_{K_{j}}\snr{\dd u}^{p} \ dx\right)+\int_{B_{R}(x_{0})\setminus K_{j}}\snr{\dd \bar{v}_{j}-\dd u}^{p} \ dx\to_{j \nearrow \infty}0
\end{flalign}
and, as a direct consequence of $\eqref{106}_{3}$, \eqref{108} and (\textit{ii}.), by the Dominated Convergence Theorem we obtain
\begin{flalign*}
\int_{B_{R}(x_{0})}\snr{f(x,\dd \bar{v}_{j})-f(x,\dd u)} \ dx\to_{j \nearrow \infty}0.
\end{flalign*}
\textbf{Step 2.} - $(\bar{v}_{j})_{j \in N}\to_{j \nearrow \infty} u$ strongly in $W^{1,p}(B_{R}(x_{0}), \SN)$.\\
From \eqref{108} we obtain that $\dd \bar{v}_{j}\to_{j \nearrow \infty}\dd u$ strongly in $L^{p}(B_{R}(x_{0}), \mathbb{R}^{N\times n})$, so we can conclude that $\bar{v}_{j}\to_{j \nearrow \infty}\tilde{u}=u+\tilde{a}$ in $L^{p}(B_{R}(x_{0}), \SN)$, where $\tilde{a}\in \RN$ is a constant vector.\\
Using the identity $u\cdot\dd u=0$, it is easy to see that either $\tilde{u}=u$ or $\tilde{u}=-u$, and the last one is excluded by the strong convergence of $(\dd \bar{v}_{j})_{j \in \N}$ to $\dd u$.\\
For later uses, we stress that the Trace Inequality renders
\begin{flalign}\label{100}
\nr{\bar{v}_{j}-u}_{L^{p}(\partial B_{R}(x_{0}), \SN)}\le C(n,N,p)\nr{\bar{v}_{j}-u}_{W^{1,p}(B_{R}(x_{0}), \SN)}\to_{j \nearrow \infty}0.
\end{flalign}
\end{proof}
\begin{corollary}\label{C4}
Let $B_{R}(x_{0})\subset \subset \Omega$ and $u\in W^{1,p}_{\mathrm{loc}}(\Omega, \SN)$ be a local minimizer of \eqref{i0}. Then there exists a sequence $(\bar{v}_{j})_{j \in \N}\subset W^{1,q}(B_{R}(x_{0}), \SN)$ such that
\begin{itemize}
    \item[\textit{i.}]$\bar{v}_{j}\to_{j \nearrow \infty}u$ strongly in $W^{1,p}(B_{R}(x_{0}), \SN)$,
    \item[\textit{ii.}]$f(\cdot,\dd \bar{v}_{j}(\cdot))\to_{j \nearrow \infty}f(\cdot,\dd u(\cdot))$ in $L^{1}(B_{R}(x_{0}))$.
\end{itemize}
\end{corollary}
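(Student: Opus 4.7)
The plan is to derive Corollary \ref{C4} as a direct composition of Lemma \ref{L11} and Lemma \ref{L12}. Lemma \ref{L11} secures the absence of the Lavrentiev phenomenon in the unconstrained setting by producing an approximating sequence in $W^{1,q}_{\mathrm{loc}}(\Omega,\RN)$, while Lemma \ref{L12} is precisely the engine that converts any such $\RN$-valued approximation into an $\SN$-valued one on a fixed ball. The only thing left to do is check that the hypotheses of Lemma \ref{L12} are satisfied after restricting to $B_{R}(x_{0})$.

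First I would apply Lemma \ref{L11} to the given local minimizer $u$, obtaining a sequence $(u_{j})_{j\in\N}\subset W^{1,q}_{\mathrm{loc}}(\Omega,\RN)$ with $u_{j}\to u$ strongly in $W^{1,p}_{\mathrm{loc}}(\Omega,\RN)$ and $f(\cdot,\dd u_{j})\to f(\cdot,\dd u)$ in $L^{1}_{\mathrm{loc}}(\Omega)$. Since $B_{R}(x_{0})\subset\subset\Omega$, these local convergences translate into global convergences on $B_{R}(x_{0})$, and $(u_{j})_{j\in\N}\subset W^{1,q}(B_{R}(x_{0}),\RN)$. Moreover, the assumption $u\in W^{1,p}_{\mathrm{loc}}(\Omega,\SN)$ gives $u\in W^{1,p}(B_{R}(x_{0}),\SN)$, so the full set of hypotheses of Lemma \ref{L12} is in place.

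Next I would feed this sequence into Lemma \ref{L12}, which immediately yields $(\bar{v}_{j})_{j\in\N}\subset W^{1,q}(B_{R}(x_{0}),\SN)$ with $\bar{v}_{j}\to u$ strongly in $W^{1,p}(B_{R}(x_{0}),\SN)$ and $f(\cdot,\dd\bar{v}_{j})\to f(\cdot,\dd u)$ in $L^{1}(B_{R}(x_{0}))$. This is exactly the content of items \textit{i.} and \textit{ii.}, so the corollary is proved.

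There is no substantial obstacle here: all the delicate work, namely the Fubini-based selection of a good translation parameter $a_{j}\in B^{N}_{1/2}(0)$ for the radial projection $w^{a}_{j}(x)=(u_{j}(x)-a)/|u_{j}(x)-a|$, the use of hypothesis (H5) to control the energy of the projected maps uniformly by that of $u_{j}$, the measure-theoretic argument showing $|K_{j}|\to 0$, and the final application of Dominated Convergence, has already been carried out inside Lemma \ref{L12}. Corollary \ref{C4} is essentially a packaging statement that isolates the combination of the two lemmas in the form in which it will be repeatedly invoked in the sequel.
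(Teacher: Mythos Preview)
Your proposal is correct and matches the paper's own proof, which simply states that the conclusion follows directly by applying Lemmas \ref{L11} and \ref{L12} to $u$. Your additional remarks about restricting the local convergences to $B_{R}(x_{0})$ and verifying the hypotheses of Lemma \ref{L12} are accurate elaborations of exactly this composition.
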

\begin{proof}
The thesis follows directly by applying Lemmas \ref{L11} and \ref{L12} to $u$.
\end{proof}
\begin{remark}
\emph{From the construction developed in Lemma \ref{L12}, we can deduce that we do not need to assume (H5) for functionals of type }$(\mathcal{C})$\emph{, if in addition we assume that}
\begin{flalign*}
h\colon \mathrm{concave, \ non \ decreasing \ and \ there \  exists \ } M>0\colon h(ct)\le Mch(t) \ \mathrm{for \ any \ }c\ge 1,
\end{flalign*}
\emph{where }$h$\emph{ is as in (}$E_{4}$\emph{).}\\
\emph{In fact, suppose for the moment that the densities} $f_{i}$\emph{,} $i \in \{1,2,3,4\}$\emph{, satisfy (H5). If} $g$\emph{ is any member of class }$(\mathcal{C})$\emph{, then}
\begin{flalign*}
\int_{B^{N}_{1/2}(0)}g(x, \dd w^{a}(x)) \ da &\le \sum_{i=1}^{4}c_{i}\left(\int_{B_{1/2}^{N}(0)}f_{i}(x, \dd w^{a}(x)) \ da+1\right)\\
&\le \sum_{i=1}^{4}c_{i}(K(f(x, \dd w(x))+1)+1)\\
&\le 2(K+1)\tilde{c}(g(x,\dd w(x))+1),
\end{flalign*}
\emph{where the }$w^{a}$\emph{'s are as in (H5), we used \eqref{el40} and } $\tilde{c}=\max_{i \in \{1,2,3,4\}}(c_{i}^{3})+1$\emph{.} \\
\emph{Hence, we only need to show (H5) for the }$f_{i}$\emph{'s. Given their structure, it is immediate to verify (H5) for }$f_{1}$, $f_{2}$\emph{ and }$f_{3}$\emph{, so we shall focus only on }$f_{4}$\emph{.}\\
\emph{Define the following quantities:}
\begin{flalign*}
d\mu =\snr{w-a}^{-p(x)}da, \quad L(x)=\mu_{x}(B_{1/2}^{N}(0))=\int_{B^{N}_{1/2}(0)}\snr{w-a}^{-p(x)} \ da,
\end{flalign*}
\emph{and notice that}
\begin{flalign*}
\frac{2^{p-N}\omega_{N}}{N-p}\le L(x)\le \frac{2^{q-N}\omega_{n}}{N-q} \quad \mathrm{for \ all \ } x \in \Omega.
\end{flalign*}
\emph{Exploiting the properties of} $h$\emph{ and the bounds on }$L$\emph{, by Jensen's Inequality we obtain}
\begin{flalign*}
\int_{B^{N}_{1/2}(0)}\snr{\dd w}^{p(x)}h(\snr{\dd w}) \ da&\le 2^{q}\snr{\dd w}^{p(x)}\int_{B^{N}_{1/2}(0)}\snr{w-a}^{-p(x)}h(2\snr{w-a}^{-1}\snr{\dd w}) \ da\\
&=2L(x)\snr{\dd w}^{p(x)}\fint_{B^{N}_{1/2}(0)}h(2\snr{w-a}^{-1}\snr{\dd w}) \ d\mu_{x} \\
&\le\frac{2^{2q-N}\omega_{N}}{(N-q)}\snr{\dd w}^{p(x)}h\left(\snr{\dd w}\fint_{B^{N}_{1/2}(0)}2\snr{w-a}^{-1} \ d\mu_{x} \ \right)\\
&\le C(N, p,q)M\snr{\dd w}^{p(x)}h(\snr{\dd w}).
\end{flalign*}
\end{remark}
\begin{remark}
\emph{It is worth noticing that we never used assumption \eqref{I3} for this part. We are aware of the fact that when \eqref{I3} is in force, then it is possible to obtain a sequence} $(\bar{v}_{j})_{j \in \N} \subset W^{1,q}_{\mathrm{loc}}(\Omega, \SN)$ \emph{strongly converging to }$u$\emph{ in the }$W^{1,p}$\emph{-norm by directly projecting on} $\SN$ \emph{the mollified sequence} $(u_{j})_{j\in \N}=(\varphi_{j}*u)_{j\in \N}$\emph{, which is well defined by means of the uniform convergence given by Morrey's Theorem. However, because of the generality of the integrand and of its }$(p,q)$\emph{-growth, it is not very clear how to show in this case the convergence of }$f(\cdot,\dd \bar{v}_{j}(\cdot))$\emph{ to} $f(\cdot,\dd u(\cdot))$\emph{ in the} $L^{1}$\emph{-norm.}
\end{remark}
\subsection{Higher Integrability for the Gradient}
Here we use our Extension results in order to develop the announced approximation scheme.\\
For $B_{R}(x_{0})\subset \subset \Omega$ and for any $j \in \N$ we define the numbers
\begin{flalign*}
\varepsilon_{j}=\left(1+j+\nr{\mathrm{D}\bar{v}_{j}}_{L^{q}({B_{R}}(x_{0}), \mathbb{R}^{N\times n})}^{3q}\right)^{-1}\to_{j \nearrow \infty}0,
\end{flalign*}
where $(\bar{v}_{j})_{j \in \N}\subset W^{1,q}(B_{R}(x_{0}), \SN)$ is the sequence provided by Corollary \ref{C4}, and the functions
\begin{flalign*}
H^{j}(z, \zeta)=f_{j}(x,\zeta)+\frac{1}{p}\snr{z-u}^{p}=f(x,\zeta)+\frac{1}{p}\snr{z-u}^{p}+\varepsilon_{j}(1+\snr{\zeta}^{2})^{q/2},
\end{flalign*}
with $z \in \mathbb{S}^{N-1}$, $\zeta \in \mathbb{R}^{N\times n}$. The functions $H^{j}$ satisfy the following properties:
\begin{flalign}
&\varepsilon_{j}\snr{\zeta}^{q}+\snr{\zeta}^{p}+\frac{1}{p}\snr{z-u}^{p}\le H^{j}(x, z, \zeta)\le C(1+\snr{\zeta}^{q}), \label{el32}\\
&\snr{\mathrm{D}_{\zeta}H^{j}(x,z,\zeta)}\le C(1+\snr{\zeta}^{q-1}),\label{el34}\\
&C^{-1}(\mu^{2}+\snr{\zeta_{1}}^{2}+\snr{\zeta_{2}}^{2})^{\frac{p-2}{2}}\snr{\zeta_{1}-\zeta_{2}}^{2}\le \left( \left(\mathrm{D}_{\zeta}H^{j}(z,\zeta_{1})-\mathrm{D}_{\zeta}H^{j}(z, \zeta_{2})\right)\cdot (\zeta_{1}-\zeta_{2} )\right)\label{el35},\\
&\snr{\dd _{\zeta}H^{j}(x_{1},z,\zeta)-\dd _{\zeta}H^{j}(x_{2},z,\zeta)}\le C\snr{x_{1}-x_{2}}^{\sigma}(\snr{\zeta}^{q-1}+1)\label{el37},
\end{flalign}
which allow us to find solutions $\vn \in W^{1,q}_{\bar{v}_{j}}(B_{R}(x_{0}),\mathbb{S}^{N-1})$ to the Dirichlet problems
\begin{flalign}\label{dp}
\min_{v\in W^{1,q}_{\bar{v}_{j}}(B_{R}(x_{0}),\mathbb{S}^{N-1})}\int_{B_{R}(x_{0})}f(x,\mathrm{D}v)+\frac{1}{p}\snr{v-u}^{p} \ dx+\varepsilon_{j}\int_{B_{R}(x_{0})}(1+\snr{\mathrm{D}v}^{2})^{q/2} \ dx.
\end{flalign}
\begin{remark}\label{R10}
$W^{1,q}(B_{R}(x_{0}), \SN)$\emph{ is not a linear space, so, apart from some special cases, \cite{b26, b27}, it is not possible to exploit the convexity of the integrand to guarantee uniqueness of solutions. This is not a problem, since for completing our regularity proof we only need the existence of at least a solution to \eqref{dp}, which results from Direct Methods, \cite{b21}.}
\end{remark}


As a first step towards the proof of the higher integrability of $\mathrm{D}u$, we derive the Euler-Lagrange equation for the family of constrained problems $\eqref{dp}$.
\begin{lemma}[Euler-Lagrange Equation]\label{L5}
Let $\vn \in W^{1,q}_{\bar{v}_{j}} (B_{R}(x_{0}), \mathbb{S}^{N-1})$ be any solution to the constrained variational problem \eqref{dp}. Then, there holds
\begin{flalign}\label{EL}
0&=\int_{B_{R}(x_{0})}f'_{j}(x,\mathrm{D}\vn)\cdot \left(\mathrm{Id}-\vn\otimes \vn\right)\mathrm{D}\varphi-f_{j}'(x,\mathrm{D}\vn)\cdot\left((\vn\cdot \varphi)\mathrm{Id}+\left(\varphi \otimes \vn + \vn \otimes \varphi\right)\right)\mathrm{D}\vn \  dx\nonumber \\
&+\int_{B_{R}(x_{0})}\snr{\vn-u}^{p-2}(\vn-u)(\mathrm{Id}-v_{j}\otimes v_{j})\F \ dx,
\end{flalign}
for any $\varphi \in C^{\infty}_{c}(B_{R}(x_{0}),\mathbb{R}^{N})$.
\end{lemma}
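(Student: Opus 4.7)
The plan is to test the minimality condition against a one-parameter family of admissible variations that stay on $\mathbb{S}^{N-1}$, and then differentiate the energy at $t=0$. Concretely, for any $\varphi \in C^{\infty}_{c}(B_{R}(x_{0}),\mathbb{R}^{N})$ I will set
\begin{equation*}
v_{j,t}(x) \;=\; \frac{v_{j}(x)+t\varphi(x)}{\snr{v_{j}(x)+t\varphi(x)}}, \qquad \snr{t}\ll 1.
\end{equation*}
Since $\snr{v_{j}}=1$ a.e. and $\varphi \in L^{\infty}$, the denominator stays uniformly away from zero for $t$ small, so $v_{j,t}\in W^{1,q}(B_{R}(x_{0}),\mathbb{S}^{N-1})$; because $\varphi$ is compactly supported in $B_{R}(x_{0})$, the trace of $v_{j,t}$ coincides with that of $v_{j}$, hence $v_{j,t}\in W^{1,q}_{\bar v_{j}}(B_{R}(x_{0}),\mathbb{S}^{N-1})$. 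This makes $v_{j,t}$ an admissible competitor in the variational problem \eqref{dp}.

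Next I would compute the two derivatives appearing after the chain rule. Using $\frac{d}{dt}\snr{v_{j}+t\varphi}\big|_{t=0}=v_{j}\cdot\varphi$ and the identity $v_{j}\cdot \mathrm{D} v_{j}=0$ (a pointwise consequence of $\snr{v_{j}}^{2}=1$), a direct calculation yields
\begin{equation*}
\left.\frac{d}{dt}v_{j,t}\right|_{t=0}=\bigl(\mathrm{Id}-v_{j}\otimes v_{j}\bigr)\varphi,
\end{equation*}
\begin{equation*}
\left.\frac{d}{dt}\mathrm{D} v_{j,t}\right|_{t=0}=\bigl(\mathrm{Id}-v_{j}\otimes v_{j}\bigr)\mathrm{D}\varphi-\Bigl((v_{j}\cdot\varphi)\mathrm{Id}+(\varphi\otimes v_{j}+v_{j}\otimes\varphi)\Bigr)\mathrm{D} v_{j},
\end{equation*}
which is precisely the tangential first variation that produces the right-hand side of \eqref{EL} after pairing with $f'_{j}(x,\mathrm{D} v_{j})$ and $\snr{v_{j}-u}^{p-2}(v_{j}-u)$.

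By minimality, $t=0$ is a stationary point of $t\mapsto \int_{B_{R}(x_{0})} H^{j}(x,v_{j,t},\mathrm{D} v_{j,t})\,dx$, so the Euler–Lagrange identity will follow once I may differentiate under the integral sign. To justify this I would bound the difference quotient $t^{-1}\bigl[H^{j}(x,v_{j,t},\mathrm{D} v_{j,t})-H^{j}(x,v_{j},\mathrm{D} v_{j})\bigr]$ pointwise by an $L^{1}$ function: from \eqref{el34} and the $q$-growth of $H^{j}$ one gets, for $\snr{t}\le t_{0}$ small, a bound of the form $c\bigl(1+\snr{\mathrm{D} v_{j}}^{q-1}+\snr{\mathrm{D}\varphi}^{q-1}\bigr)\bigl(\snr{\mathrm{D}\varphi}+\snr{\varphi}\snr{\mathrm{D} v_{j}}\bigr)$, which lies in $L^{1}(B_{R}(x_{0}))$ since $\mathrm{D} v_{j}\in L^{q}$, $\varphi\in C^{\infty}_{c}$, and $q/(q-1)$ is the conjugate exponent. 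Dominated convergence then permits exchanging limit and integral, and collecting terms gives exactly \eqref{EL} for every $\varphi\in C^{\infty}_{c}(B_{R}(x_{0}),\mathbb{R}^{N})$.

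The only delicate point I foresee is the uniform-in-$t$ control on $\mathrm{D} v_{j,t}$: one must make sure that $\snr{v_{j}+t\varphi}\ge 1/2$ uniformly on the support of $\varphi$ for $\snr{t}\le t_{0}(\varphi)$, so that all quotients remain smooth functions of $t$ and the $L^{q}$-bounds on $\mathrm{D} v_{j,t}$ remain uniform; once this is secured, the rest of the argument is a routine application of the dominated convergence theorem and the fact that $f'_{j}$ is continuous in $\zeta$ with the growth bound \eqref{el34}.
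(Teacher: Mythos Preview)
Your proposal is correct and follows essentially the same route as the paper: both define the spherical variation $v_{j}^{s}=\snr{v_{j}+s\varphi}^{-1}(v_{j}+s\varphi)$, compute the same formulas for $\frac{d}{ds}v_{j}^{s}\big|_{s=0}$ and $\frac{d}{ds}\mathrm{D}v_{j}^{s}\big|_{s=0}$, and conclude by stationarity at $s=0$. If anything, you supply more detail than the paper does, explicitly justifying the exchange of limit and integral via dominated convergence and the growth bound \eqref{el34}, whereas the paper treats this step as routine.
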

\begin{proof}
Take $\varphi \in C^{\infty}_{c}(B_{R}(x_{0}), \mathbb{R}^{N})$, $s \in (-\delta, \delta)$ and define $\vn^{s}=\frac{\vn+s\varphi}{\snr{\vn+s\varphi}}$.\\ 
Since $\snr{\vn}=1$, choosing $\delta$ small enough, $\vn^{s}$ is a well defined variation on the target for $s \in (-\delta, \delta)$.\\
We compute
\begin{flalign}\label{el5}
\left. \frac{d}{ds}\vn^{s} \ \right |_{s=0}&=(\mathrm{Id}-v_{j}\otimes v_{j})\F,\\
\left. \frac{d}{ds}(\mathrm{D}\vn^{s})\right |_{s=0}&=\left(\mathrm{Id}-\vn\otimes \vn\right)\mathrm{D}\varphi-\left[(\vn\cdot \varphi) \mathrm{Id}+\left(\varphi\otimes \vn+\vn\otimes \varphi\right)\right]\mathrm{D}\vn.\label{el3}
\end{flalign}

Using the fact that $\vn$ is a minimizer, together with \eqref{el5} and \eqref{el3} we obtain
\begin{flalign*}
0&=\left.\left(\int_{B_{R}(x_{0})}f_{j}(x,\mathrm{D}\vn^{s})+\frac{1}{p}\snr{\vn^{s}-u}^{p} \ dx\right) \ \right |_{s=0}\\
&=\left.\left(\int_{B_{R}(x_{0})}f'(x,\mathrm{D}\vn^{s})\cdot \frac{d}{ds}(\mathrm{D}\vn^{s})+\snr{\vn^{s}-u}^{p-2}(\vn^{s}-u)\cdot \frac{d}{ds}\vn^{s} \ dx\right) \ \right |_{s=0}\\
&=\int_{B_{R}(x_{0})}f'_{j}(x,\mathrm{D}\vn)\cdot \left(\mathrm{Id}-\vn\otimes \vn\right)\mathrm{D}\varphi-f'(x,\mathrm{D}\vn)\cdot\left[(\vn\cdot \varphi) \mathrm{Id}+\left(\varphi\otimes \vn+\vn\otimes \varphi\right)\right]\mathrm{D}\vn \  dx\\
&+\int_{B_{R}(x_{0})}\snr{\vn-u}^{p-2}(\vn-u)\cdot (\mathrm{Id}-v_{j}\otimes v_{j})\F \ dx,
\end{flalign*}
which is \eqref{EL}.
\end{proof}
\begin{remark}\label{R3}
\emph{In spite of the severe complications induced by the constraint, formulation \eqref{EL} permits to reflect the constraint condition on the structure of the integrand, so the test function }$\F$\emph{ is totally free.} \\
\emph{Furthermore, by density, \eqref{EL} holds for any} $\F \in W^{1,q}_{0}(B_{R}(x_{0}), \RN)$\emph{.}
\end{remark}
Now we can state our main theorem, which exploits the procedure developed in \cite{b15, b54}.
\begin{theorem}\label{T1}
Let $\mathcal{F}$ belong to classes ($\mathcal{A}$), ($\mathcal{B}$) or ($\mathcal{C}$), with $f$ verifying assumptions (H0)-(H5), and $p,q$ be such that \eqref{I3}-\eqref{I2} hold.\\
If $u \in W^{1,p}_{\mathrm{loc}}(\Omega,\mathbb{S}^{N-1})$ is any local minimizer of \eqref{prob}, then 
\begin{flalign*}
u \in W^{1,t}_{\mathrm{loc}}(\Omega,\mathbb{S}^{N-1}) \ \mathrm{for\ all} \ t \in \left(1,\frac{np}{n-\alpha}\right).
\end{flalign*}
Moreover, for any $0<\rho<R$ there exist three constants 
\begin{flalign*}
c=c(n,N,p,q, t, \alpha, C)<\infty, \quad \tilde{\beta}=\tilde{\beta}(n,p,q,t,\alpha)>0, \quad \tilde{\alpha}=\tilde{\alpha}(n,p,q,t,\alpha)>0,
\end{flalign*}
such that
\begin{flalign*}
\int_{B_{\rho}(x_{0})}\snr{\mathrm{D}u}^{t} \ dx\le \frac{c}{(R-\rho)^{\tilde{\alpha}}}\left(1+\int_{B_{R}(x_{0})}f(x,\mathrm{D}u)\ dx\right)^{ \ \tilde{ \beta}}.
\end{flalign*}
\end{theorem}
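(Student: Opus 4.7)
The plan is to apply a difference-quotient argument to the Euler--Lagrange equation \eqref{EL} satisfied by each solution $\vn$ of \eqref{dp}, extract a uniform fractional Sobolev estimate on $\dd \vn$, and then pass to the limit $j \nearrow \infty$ using Corollary \ref{C4} together with the penalization term $\frac{1}{p}|v-u|^{p}$ in \eqref{dp}, which forces the approximating sequence to select the distinguished minimizer $u$.

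First, I would test \eqref{EL} with $\F = \hs(\eta^{2}\sh \vn)$, where $\eta\in C^{\infty}_{c}(B_{(\rho+R)/2}(x_{0}))$ is a standard cutoff, $\eta \equiv 1$ on $B_{\rho}(x_{0})$, with $|h|$ small enough. Discrete integration by parts on the principal term, combined with the ellipticity (H3), hypothesis (H4), the growth \eqref{el30}, and Lemma \ref{L3}, will bound from below $\int \eta^{2}|\sh V(\dd \vn)|^{2}\,dx$ (with $V(\zeta)=(\mu^{2}+|\zeta|^{2})^{(p-2)/4}\zeta$) by controllable errors of the form $|h|^{\sigma}\int (1+|\dd \vn|^{q-1})|\sh \dd \vn|\,dx$, plus an $\varepsilon_{j}$-contribution that tends to zero by the choice of $\varepsilon_{j}$. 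The curvature terms in \eqref{EL} are handled by rewriting $\sh \vn$ as a Hölder remainder: since $p>n$ by \eqref{I3}, Morrey's theorem gives $\|\sh \vn\|_{L^{\infty}}\le c|h|^{\kappa}\nr{\dd \vn}_{L^{p}}$. The penalization term contributes something of order $|h|^{\kappa}\int|\vn-u|^{p-1}|\sh\dd\vn|\,dx$, which is harmless because $|\vn|=1$ and $u\in L^{\infty}$. Young's inequality (with a reabsorption parameter) and Lemma \ref{L6} then transform the resulting estimate into
\begin{flalign*}
\sum_{s=1}^{n}\int_{B_{R}(x_{0})}\eta^{2}|\sh \dd \vn|^{2}\,dx \le \frac{c\,|h|^{2d}}{(R-\rho)^{\alpha_{1}}}\Bigl(1+\int_{B_{R}(x_{0})}f(x,\dd \vn)\,dx\Bigr)^{\beta_{1}},
\end{flalign*}
uniformly in $j$, for some $d<\alpha/2$. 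The condition \eqref{I2} enters precisely here, allowing the $|\dd\vn|^{q-1}$-error to be reabsorbed into the left-hand side after Young's inequality.

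Next, Lemma \ref{L2} applied to $V(\dd \vn)$ (inverting the algebraic inequality of Lemma \ref{L3}) yields $V(\dd \vn)\in L^{2n/(n-2b)}_{\mathrm{loc}}$ for every $b<d$, which translates into $\dd \vn \in L^{t}_{\mathrm{loc}}$ for every $t<np/(n-\alpha)$, with an estimate uniform in $j$ of the form
\begin{flalign*}
\int_{B_{\rho}(x_{0})}|\dd \vn|^{t}\,dx \le \frac{c}{(R-\rho)^{\tilde{\alpha}}}\Bigl(1+\int_{B_{R}(x_{0})}f(x,\dd \vn)\,dx\Bigr)^{\tilde{\beta}}.
\end{flalign*}
Lemma \ref{L4} handles the iterative reabsorption in the radii to produce the final $(R-\rho)^{-\tilde\alpha}$ dependence. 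Finally, the uniform $W^{1,t}$-bound extracts a weak limit $\vn \rightharpoonup v$; comparison of the $H^{j}$-energies of $\vn$ with those of $\bar v_{j}$, the strong $L^{1}$-convergence $f(\cdot,\dd \bar v_{j})\to f(\cdot,\dd u)$ from Corollary \ref{C4}, together with the $\varepsilon_{j}\to 0$ decay, force $\int \frac{1}{p}|v-u|^{p}\,dx = 0$, hence $v=u$, and lower semicontinuity of the $L^{t}$-norm transfers the estimate to $u$.

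The main obstacle is controlling the curvature term $-f'_{j}(x,\dd \vn)\cdot\bigl((\vn\cdot\F)\mathrm{Id}+\F\otimes \vn + \vn\otimes \F\bigr)\dd \vn$, which carries the critical growth $|\dd \vn|^{q}$: once tested with $\F = \hs(\eta^{2}\sh \vn)$, it produces factors of the form $|\sh \vn|\,|\dd \vn|^{q}$ that cannot be absorbed using only the $W^{1,p}$-bound. It is exactly here that the hypothesis $n<p$ becomes indispensable, since only Morrey's Hölder bound $|\sh \vn|\le c|h|^{\kappa}$ permits trading a power of $|h|$ against the extra $q-p$ powers of $|\dd \vn|$, after which the dimensional constraint \eqref{I2} is what makes the overall Young's balance close.
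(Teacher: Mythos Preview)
Your overall architecture---test \eqref{EL} with $\hs(\eta^{2}\sh v_{j})$, use Morrey's bound on $\sh v_{j}$ for the curvature terms, extract fractional differentiability via Lemma~\ref{L2}, then pass to the limit using the penalization---matches the paper. But the central analytic step is misplaced, and as written your argument does not close.

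The gap is your claim that ``Young's inequality (with a reabsorption parameter) \ldots transforms the resulting estimate'' into one whose right-hand side involves only $\int f(x,\dd v_{j})\,dx$, and that ``the condition \eqref{I2} enters precisely here, allowing the $|\dd v_{j}|^{q-1}$-error to be reabsorbed into the left-hand side after Young's inequality.'' This cannot work. After using (H3), (H4), \eqref{el30} and the Morrey bound, the inequality you actually obtain is
\[
\int_{B_{R}}\eta^{2}\,\snr{\sh V(\dd v_{j})}^{2}\,dx \;\le\; \frac{c\,m^{1/p}\snr{h}^{\alpha}}{(d-r)}\int_{B_{d}}\bigl(1+\snr{\dd v_{j}}^{q}\bigr)\,dx,
\]
with $V(\zeta)=(\mu^{2}+\snr{\zeta}^{2})^{(p-2)/4}\zeta$. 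The left side controls \emph{differences} of $\dd v_{j}$ (weighted by $\snr{\dd v_{j}}^{p-2}$), while the error is a pure $L^{q}$-norm of $\dd v_{j}$; no Young's inequality trades one for the other. If you try to split, say, $\snr{h}^{\alpha}\snr{\dd v_{j}}^{q-1}\snr{\sh\dd v_{j}}$ as $\varepsilon\snr{\dd v_{j}}^{p-2}\snr{\sh\dd v_{j}}^{2}+C_{\varepsilon}\snr{h}^{2\alpha}\snr{\dd v_{j}}^{2q-p}$, the remainder has exponent $2q-p>q$, which is worse, not better. And the curvature terms produce $\snr{h}^{\alpha}\int\snr{\dd v_{j}}^{q}$ directly, with no $\sh\dd v_{j}$ factor at all.

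What the paper does instead---and what you are missing---is to \emph{accept} the $\int(1+\snr{\dd v_{j}}^{q})$ on the right, apply Lemma~\ref{L2} as is, and obtain on a smaller ball
\[
\int_{B_{r}}\snr{\dd v_{j}}^{q\delta}\,dx \;\le\; \frac{c}{(d-r)^{a}}\Bigl(\int_{B_{d}}(1+\snr{\dd v_{j}}^{q})\,dx\Bigr)^{n/(n-2\tau)},\qquad \delta=\frac{p}{q}\,\frac{n}{n-2\tau}>1.
\]
Only now does \eqref{I2} enter: it guarantees that $q$ lies strictly between $p$ and $q\delta$ with the right balance, so that H\"older interpolation of $\snr{\dd v_{j}}^{q}$ between $\snr{\dd v_{j}}^{p}$ and $\snr{\dd v_{j}}^{q\delta}$, followed by Young's inequality, yields
\[
\int_{B_{r}}\snr{\dd v_{j}}^{q\delta}\,dx \;\le\; \frac{c}{(d-r)^{\tilde a}}\Bigl(\int_{B_{d}}(1+\snr{\dd v_{j}})^{p}\,dx\Bigr)^{\beta}+\tfrac{1}{2}\int_{B_{d}}(1+\snr{\dd v_{j}})^{q\delta}\,dx,
\]
and \emph{this} is what Lemma~\ref{L4} iterates. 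In short: the reabsorption happens \emph{after} the fractional embedding, at the level of the $L^{q\delta}$-norm, not before it at the level of finite differences. Your outline skips this interpolation step entirely, and without it the estimate never becomes uniform in $j$.

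Two smaller points: your displayed inequality should have $\snr{\sh V(\dd v_{j})}^{2}$, not $\snr{\sh\dd v_{j}}^{2}$; and the $\varepsilon_{j}$-term does not ``tend to zero'' in the difference-quotient estimate---it is simply absorbed into $f_{j}'$, which satisfies the same structural bounds \eqref{el32}--\eqref{el37} as $f'$. The smallness of $\varepsilon_{j}$ is only used at the very end, in the energy comparison \eqref{e28}.
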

\begin{proof}
Notice that, taking $j \in \N$ sufficiently large, by \eqref{el32} and minimality we have that
\begin{flalign}\label{el26}
\int_{B_{R}(x_{0})}\snr{\dd v_{j}}^{p} \ dx &\le \int_{B_{R}(x_{0})}f(x, \dd v_{j})+\frac{1}{p}\snr{v_{j}-u}^{p} \ dx +\varepsilon_{j}\int_{B_{R}(x_{0})}(1+\snr{\dd v_{j}}^{2})^{q/2} \ dx\nonumber \\
&\le \int_{B_{R}(x_{0})}f(x, \dd \bar{v}_{j})+\frac{1}{p}\snr{\bar{v}_{j}-u}^{p} \ dx +\varepsilon_{j}\int_{B_{R}(x_{0})}(1+\snr{\dd \bar{v}_{j}}^{2})^{q/2} \ dx\nonumber \\
&\le\int_{B_{R}(x_{0})}f(x, \dd u) \ dx+1,
\end{flalign}
where $(\bar{v}_{j})_{j \in \N}\subset W^{1,q}(B_{R}(x_{0}), \SN)$ is the sequence given by Corollary \ref{C4}.\\ 
So, by virtue of Morrey's Theorem, we can uniformly bound the oscillation of the $v_{j}$'s as
\begin{flalign}\label{el12}
\snr{v_{j}(x)-v_{j}(y)}&\le c(n,N, p)\snr{x-y}^{\alpha}\nr{\dd v_{j}}_{W^{1,p}(B_{R}(x_{0}),\RN)}\nonumber \\
&\le c(n,N, p)\snr{x-y}^{\alpha}\left(\int_{B_{R}(x_{0})}f(x,\dd u)+1 \ dx\right)^{1/p}=C(n,N,p)m^{1/p}\snr{x-y}^{\alpha}, 
\end{flalign}
where $m=\int_{B_{R}(x_{0})}f(x,\dd u)\ dx$+1.\\
We take $0<\rho\le r<d<R\le 1$ and $\eta\colon B_{R}(x_{0})\to [0,\infty) $ such that
\begin{flalign}\label{90}
\supp(\eta)\subset \subset B_{\frac{d+r}{2}}(x_{0}),\quad 0\le \eta\le 1, \quad \eta \equiv 1 \ \mathrm{in} \ B_{r}(x_{0}), \quad \snr{\mathrm{D}\eta}\le \frac{4}{(d-r)}.
\end{flalign}
Moreover, in the following, $s \in \{1,\cdots, n\}$ and $h \in \mathbb{R}$ satisfies
\begin{flalign}\label{h}
0<\snr{h}\le \frac{d-r}{4}.
\end{flalign}
Testing \eqref{EL} against $\F=\hs(\eta^{2}\sh(\vn))$, which is admissible by Remark \ref{R3} and because 
\begin{flalign*}\vn \in W^{1,q}(B_{R}(x_{0}), \mathbb{S}^{N-1}),
\end{flalign*}
and taking into account of the fact that $\sh$ and $\mathrm{D}$ commute, we obtain
\begin{flalign*}
0 &=\int_{B_{R}(x_{0})}f'_{j}(x,\mathrm{D}\vn)\cdot (\mathrm{Id}-\vn \otimes \vn)\hs\left(\mathrm{D}(\eta^{2}(\sh\vn))\right) \ dx\\
&-\int_{B_{R}(x_{0})}f'_{j}(x,\mathrm{D}\vn)\cdot\left(\vn\cdot \hs(\eta^{2}(\sh\vn))\right)\mathrm{D}\vn \ dx\\
&-\int_{B_{R}(x_{0})}f'_{j}(x,\mathrm{D} \vn)\cdot\left(\vn \otimes \hs(\eta^{2}(\sh\vn))+\hs(\eta^{2}(\sh\vn))\otimes \vn\right)\mathrm{D}\vn \ dx\\
&+\int_{B_{R}(x_{0})}\snr{\vn -u}^{p-2}(\vn-u)\cdot(Id-v_{j}\otimes v_{j})\hs(\eta^{2}\sh v_{j}) \ dx=(\mathrm{I})+(\mathrm{II})+(\mathrm{III})+(\mathrm{IV}).
\end{flalign*}

We use integration by parts formula and multiplicative rule for finite difference operators, [15], as to obtain a suitable expression of terms (I)-(IV), and then estimate them.\\

\textbf{Term (I).}\\
\begin{flalign*}
(\mathrm{I})&=-\int_{B_{R}(x_{0})}\sh(f'_{j}(x,\mathrm{D}\vn))(Id-\vn \otimes \vn)\mathrm{D}(\eta^{2}(\sh\vn)) \ dx \\
&-\int_{B_{R}(x_{0})}\ep(f'_{j}(x,\mathrm{D}\vn))\sh(Id-\vn\otimes \vn)\mathrm{D}(\eta^{2}(\sh\vn))\ dx \\
&=-\int_{B_{R}(x_{0})}\sh(f'_{j}(x,\mathrm{D}\vn))(Id-\vn \otimes \vn)\eta^{2}\sh(\mathrm{D}\vn) \ dx\\
&-2\int_{B_{R}(x_{0})}\sh(f'_{j}(x,\mathrm{D}\vn))(Id-\vn \otimes \vn)\eta \mathrm{D}\eta (\sh\vn)\ dx \\
&-\int_{B_{R}(x_{0})}\ep(f'_{j}(x,\mathrm{D}\vn))\sh(Id-\vn\otimes \vn)\mathrm{D}(\eta^{2}(\sh\vn))\ dx \\
&=(\mathrm{I})_{1}+(\mathrm{I})_{2}+(\mathrm{I})_{3}.\\
\end{flalign*}

Since $\vn$ takes values into $\mathbb{S}^{N-1}$, $\mathrm{D}\vn=(Id-\vn \otimes \vn)\mathrm{D}\vn$, so
\begin{flalign*}
(\mathrm{I})_{1}&=-\int_{B_{R}(x_{0})}\left(f'_{j}(x+he_{s},\dd v_{j}(x+he_{s}))-f'_{j}(x,\dd v_{j}(x))\right)(\mathrm{Id}-v_{j}\otimes v_{j})\eta^{2}\sh(\dd v_{j}) \ dx\\
&=-\int_{B_{R}(x_{0})}\left(f'_{j}(x,\dd v_{j}(x+he_{s}))-f'_{j}(x,\dd v_{j}(x))\right)(\mathrm{Id}-v_{j}\otimes v_{j})\eta^{2}\sh(\dd v_{j}) \ dx\\
&-\int_{B_{R}(x_{0})}\left(f'_{j}(x+he_{s},\dd v_{j}(x+he_{s}))-f'_{j}(x,\dd v_{j}(x+he_{s}))\right)(\mathrm{Id}-v_{j}\otimes v_{j})\eta^{2}\sh(\dd v_{j}) \ dx\\
&=-\int_{B_{R}(x_{0})}\left(f'_{j}(x,\dd v_{j}(x+he_{s}))-f'_{j}(x,\dd v_{j}(x))\right)\sh(\dd v_{j})\eta^{2}\ dx\\
&+\int_{B_{R}(x_{0})}\left(f'_{j}(x,\dd v_{j}(x+he_{s}))-f'_{j}(x,\dd v_{j}(x))\right)\sh(\mathrm{Id}-v_{j}\otimes v_{j})\ep(\dd v_{j})\eta^{2} \ dx\\
&-\int_{B_{R}(x_{0})}\left(f'_{j}(x+he_{s},\dd v_{j}(x+he_{s}))-f'_{j}(x,\dd v_{j}(x+he_{s}))\right)(\mathrm{Id}-v_{j}\otimes v_{j})\eta^{2}\sh(\dd v_{j}) \ dx\\
&=(\mathrm{I})_{1}^{1}+(\mathrm{I})_{1}^{2}+(\mathrm{I})_{1}^{3}.
\end{flalign*}

Using \eqref{el35} and Lemma \ref{L3}, we estimate $(\mathrm{I})_{1}^{1}$ as
\begin{flalign}\label{e1}
-(\mathrm{I})_{1}^{1}&\le -C^{-1} \int_{B_{R}(x_{0})}\eta^{2}\left( \mu^{2}+\snr{\dd v_{j}}^{2}+\snr{\ep(\dd v_{j})}^{2} \right)^{\frac{p-2}{2}} \snr{\sh (\dd v_{j})}^{2} \ dx\nonumber\\
&\le - c\int_{B_{R}(x_{0})}\snr{\sh((\mu^{2}+\snr{\dd v_{j}}^{2})^{\frac{p-2}{4}}\dd {v_{j}})}^{2} \eta^{2}\ dx.
\end{flalign}

Moreover, by H\"older's inequality, \eqref{el12}, \eqref{90} and Lemma \ref{L6},
\begin{flalign}\label{e2}
\snr{(\mathrm{I})_{1}^{2}}&\le cm^{1/p}\snr{h}^{\alpha}\left[\int_{B_{R}(x_{0})}(1+\snr{\ep(\dd v_{j})}^{q-1}+\snr{\dd v_{j}}^{q-1})\snr{\ep(\dd v_{j} )}\eta^{2} \ dx \ \right]\nonumber\\
&\le cm^{1/p}\snr{h}^{\alpha}\left(\int_{B_{d}(x_{0})}1+\snr{\dd v_{j}}^{q} \ dx \ \right),
\end{flalign}
and, from \eqref{el12}, \eqref{90}, Lemma \ref{L6} and the boundedness of the quantities involved,
\begin{flalign}\label{e3}
\snr{(\mathrm{I})_{1}^{3}}\le c\snr{h}^{\alpha}\int_{B_{R}(x_{0})}(1+\snr{\ep(\dd v_{j})}^{q-1})\snr{\sh (\dd v_{j})}\eta^{2} \ dx\le c\snr{h}^{\alpha}\int_{B_{d}(x_{0})}1+\snr{\dd v_{j}}^{q} \ dx.
\end{flalign}
Moreover, by \eqref{el12}, \eqref{90}, Lemma \ref{L6} and \eqref{el37},
\begin{flalign}\label{e4}
\snr{(\mathrm{I})_{2}}&\le \frac{cm^{1/p}\snr{h}^{\alpha}}{(d-r)}\left[\int_{B_{R}(x_{0})}\snr{f'_{j}(x+he_{s},\dd v_{j}(x+he_{s}))-f'_{j}(x, \dd v_{j}(x+he_{s}))}\eta \ dx\right.\nonumber \\
&\left.\int_{B_{R}(x_{0})}\snr{f'_{j}(x, \dd v_{j}(x+he_{s}))-f'_{j}(x,\dd v_{j}(x))}\eta \ dx \ \right]\nonumber \\
&\le \frac{cm^{1/p}\snr{h}^{\alpha}}{(d-r)}\left[\int_{B_{R}(x_{0})}\left((1+\snr{\ep (\dd v_{j})})^{q-1}+(1+\snr{\dd v_{j}})^{q-1} \right)\eta\ dx\right]\nonumber \\
&\le \frac{cm^{1/p}\snr{h}^{\alpha}}{(d-r)}\left(\int_{B_{d}(x_{0})}1+\snr{\dd v_{j}}^{q} \ dx \ \right),
\end{flalign}
and, in a similar way,
\begin{flalign}\label{e5}
\snr{(\mathrm{I})_{3}}&\le \frac{cm^{1/p}\snr{h}^{\alpha}}{(d-r)}\int_{B_{R}(x_{0})}(1+\snr{\ep (\dd v_{j})}^{q-1})\eta \ dx\nonumber \\
&+cm^{1/p}\snr{h}^{\alpha}\int_{B_{R}(x_{0})}(1+\snr{\ep (\dd v_{j})}^{q-1})\snr{\sh (\dd v_{j})}\eta^{2} \ dx\nonumber \\
&\le \frac{cm^{1/p}\snr{h}^{\alpha}}{(d-r)}\left(\int_{B_{d}(x_{0})}1+\snr{\dd v_{j}}^{q} \ dx\right).
\end{flalign}
Collecting estimates \eqref{e1}-\eqref{e5} we can conclude that
\begin{flalign}\label{e6}
(\mathrm{I})\le - c\int_{B_{R}(x_{0})}\snr{\sh(\mu^{2}+\snr{\dd v_{j}}^{2})^{\frac{p-2}{4}}\dd {v_{j}}}^{2} \ dx+\frac{Cm^{1/p}\snr{h}^{\alpha}}{(d-r)}\left(\int_{B_{d}(x_{0})}1+\snr{\dd v_{j}}^{q} \ dx\ \right),
\end{flalign}
with $c=c(n,N,p,q,C)>0$.\\
\textbf{Term (II).}\\
We use the multiplicative rule for finite differences operators, the fact that
\begin{flalign}
\eme(\sh (v_{j}))&=\sh(v_{j})(x-he_{s})=\hs(v_{j}),\label{e7}\\
\hs(\sh (v_{j}))&=\sh(v_{j})-\hs(v_{j}),\label{e8}
\end{flalign}
and \eqref{el12} to obtain
\begin{flalign}\label{el6}
\snr{(\mathrm{II})}&\le c\int_{B_{R}(x_{0})}(1+\snr{\dd v_{j}}^{q-1})\snr{\hs (\eta^{2})\hs(v_{j})+\eta^{2}(\sh(v_{j})-\hs(v_{j}))}\snr{\dd v_{j}} \ dx\nonumber \\
&\le \frac{cm^{1/p}\snr{h}^{\alpha}}{(d-r)}\left(\int_{B_{d}(x_{0})}1+\snr{\dd v_{j}}^{q} \ dx \ \right),
\end{flalign}
$c=c(n,N,p,q,C)>0$.\\
\textbf{Term (III).}\\
Term (III) contains the same terms of term (II), so, exploiting again identities \eqref{e6}-\eqref{e7} and \eqref{el12} we obtain
\begin{flalign}\label{el8}
\snr{(\mathrm{III})}&\le \frac{cm^{1/p}\snr{h}^{\alpha}}{(d-r)}\left(\int_{B_{d}(x_{0})}1+\snr{\dd v_{j}}^{q} \ dx \ \right),
\end{flalign}
for $c=c(n,N,p,q,C)>0$.\\
\textbf{Term (IV).}\\
Finally proceding as before and taking into account that the $v_{j}$'s and $u$ are bounded, we can conclude that
\begin{flalign}\label{el9}
\snr{(\mathrm{IV})}\le \frac{c\snr{h}^{\alpha}}{(d-r)}\left(\int_{B_{d}(x_{0})}1+\snr{\dd v_{j}}^{q} \ dx\right),
\end{flalign}
$c=c(n,N,p,q,C)>0$.\\
From estimates \eqref{e6}, \eqref{el6}, \eqref{el8}, \eqref{el9}, identity \eqref{EL}, and $\eqref{90}_{3}$, summing on $s \in \{1,\cdots, n\}$ we can conclude that
\begin{flalign}\label{el10}
\int_{B_{r}(x_{0})}\sum_{s=1}^{n}\snr{\sh((\mu^{2}+\snr{\dd v_{j}}^{2})^{\frac{p-2}{4}}\dd {v_{j}})}^{2} \ dx&\le \frac{cm^{1/p}\snr{h}^{\alpha}}{(d-r)}\left(\int_{B_{d}(x_{0})}1+\snr{\dd v_{j}}^{q} \ dx \ \right),
\end{flalign}
with $c=c(n,N,p,q,C)>0$ and $m^{1/p}$ is as in \eqref{el12} and both are independent of $j$.\\
From Lemma \ref{L2} we deduce that
\begin{flalign}\label{e10}
(\mu^{2}+\snr{\mathrm{D}\vn}^{2})^{\frac{p-2}{4}}\mathrm{D}\vn \in L^{\frac{2n}{n-2\tau}}(B_{r}(x_{0})) \quad \mathrm{for\ any\ } \tau \in \left(0,\frac{\alpha}{2}\right).
\end{flalign}
Furthermore, remembering \eqref{h}, there exists $c=c(n,N,p,q,\alpha, \tau, C)<\infty$ such that
\begin{flalign}\label{e17}
\nr{(\mu^{2}+\snr{\mathrm{D}\vn}^{2})^{\frac{p-2}{4}}& \mathrm{D}\vn)}_{L^{\frac{2n}{n-2\tau}}(B_{r}(x_{0}))}\nonumber\\
&\le \frac{c}{(d-r)^{2\tau+\alpha+2}}\left[\int_{B_{d}(x_{0})}\left(1+\snr{\mathrm{D}\vn}^{p}\right) \ dx+\frac{cm^{1/p}}{(d-r)}\int_{B_{d}(x_{0})}1+\snr{\mathrm{D}\vn}^{q} \ dx\right]^{\frac{1}{2}}\nonumber\\
&\le \frac{c(m^{\frac{1}{2p}}+1)}{(d-r)^{2\tau+\alpha+3}}\left(\int_{B_{d}(x_{0})}1+\snr{\mathrm{D}\vn}^{q} \ dx\right)^{ \ \frac{1}{2}}.
\end{flalign}

Setting
\begin{flalign}\label{el28}
a=(2\tau+\alpha+3)\frac{2n}{n-2\tau}, \quad \delta=\frac{p}{q}\frac{n}{n-2\tau}>1, \quad \tilde{m}=(m^{\frac{1}{2p}}+1)^{\frac{2n}{(n-2\tau)}},
\end{flalign}
it follows that
\begin{flalign}\label{e18}
\int_{B_{r}(x_{0})}\snr{\mathrm{D}\vn}^{q\delta} \ dx\le \frac{c\tilde{m}}{(d-r)^{a}}\left(\int_{B_{d}(x_{0})}\left(1+\snr{\mathrm{D}\vn}^{q}\right) \ dx\right)^{\frac{n}{n-2\tau}},
\end{flalign}
where $c=c(n,N,p,q,\alpha, \tau, C)$. From the arbitrariety of $r<d< R$, it follows that
\begin{flalign}\label{e19}
\snr{\mathrm{D}\vn}^{q\delta}\in L^{1}_{\mathrm{loc}}(B_{R}(x_{0})).
\end{flalign}

Now we consider $\gamma>\delta$ to be chosen later. By H\"older inequality we get that
\begin{flalign}\label{e20}
\int_{B_{r}(x_{0})}\snr{\mathrm{D}\vn}^{q\delta} \ dx&\le \frac{c\tilde{m}}{(d-r)^{a}}\left(\int_{B_{d}(x_{0})}\left(1+\snr{\mathrm{D}\vn}\right)^{q\left(1-\frac{\delta}{\gamma}\right)}\left(1+\snr{\mathrm{D}\vn}\right)^{\frac{q\delta}{\gamma}} \ dx\right)^{\frac{\delta q}{p}}\nonumber\\
&\le \frac{c\tilde{m}}{(d-r)^{a}}\left(\int_{B_{d}(x_{0})}\left(1+\snr{\mathrm{D}\vn}^{q\delta}\right) \ dx\right)^{\frac{\delta q}{\gamma p}}\left(\int_{B_{d}(x_{0})}\left(1+\snr{\mathrm{D}\vn}\right)^{q\left(\frac{\gamma-\delta}{\gamma-1}\right)} \ dx\right)^{\frac{\delta q}{p}\frac{\gamma-1}{\gamma}}.
\end{flalign}
Define
\begin{flalign*}
\varepsilon=\frac{\delta}{\gamma}\frac{q}{p}, \quad b=\frac{\gamma-\delta}{\gamma-1}, \quad \lambda=\gamma-1,
\end{flalign*}
so \eqref{e20} becomes
\begin{flalign}\label{e22}
\int_{B_{r}(x_{0})}\snr{\mathrm{D}\vn}^{q\delta} \ dx &\le \left(\int_{B_{d}(x_{0})}\left(1+\snr{\mathrm{D}\vn}^{q\delta}\right) \ dx\right)^{\varepsilon}\nonumber \\
&\times \frac{c\tilde{m}}{(d-r)^{a}}\left(\int_{B_{d}(x_{0})}\left(1+\snr{\mathrm{D}\vn}\right)^{qb} \ dx\right)^{\lambda \varepsilon}.
\end{flalign}
We would like to pick $\gamma$ in such a way that the inequalities
\begin{flalign}\label{e21}
\varepsilon <1, \quad qb\le p, \quad b<\delta
\end{flalign}
are satisfied. Since 
\begin{flalign*}
\delta=\frac{p}{q}\frac{n}{(n-2\tau)}>\frac{p}{q},
\end{flalign*}
the third inequality in \eqref{e21} is implied by the second one. So we only need to check the other two inequalities in \eqref{e21}, which are respectively equivalent to
\begin{flalign}\label{e23}
\delta \frac{q}{p}<\gamma, \quad \gamma\le \frac{\delta q-p}{q-p}.
\end{flalign}
All in all, we can find $\gamma$ with the aforementioned features if and only if
\begin{flalign*}
\delta\frac{q}{p}<\frac{\delta q-p}{q-p},
\end{flalign*}
which is equivalent to
\begin{flalign*}
\frac{q}{p}<\frac{n+\alpha}{n}.
\end{flalign*}
This last condition is matched, by \eqref{I2}.\\
We turn back to \eqref{e22} and we apply Young inequality to obtain
\begin{flalign}\label{e24}
\int_{B_{r}(x_{0})}\snr{\mathrm{D}\vn}^{q\delta} \ dx &\le \frac{c\tilde{m}^{\frac{1}{(1-\varepsilon)}}}{(d-r)^{\frac{a}{1-\varepsilon}}}\left(\int_{B_{d}(x_{0})}(1+\snr{\mathrm{D}\vn})^{p} \ dx\right)^{\frac{\lambda \varepsilon}{1-\varepsilon}}+\frac{1}{2}\int_{B_{d}(x_{0})}(1+\snr{\mathrm{D}\vn})^{q\delta} \ dx,
\end{flalign}
where $c=c(n,N,p,q,\alpha, \tau, C)>0$.

Notice that the right-hand side of \eqref{e24} is finite because of \eqref{e18}. Now we apply Lemma \ref{L4} with the choice
\begin{flalign*}
R_{0}=R-\nu, \quad 0<\nu<R-\rho,
\end{flalign*}
\begin{flalign*}
A=\tilde{m}^{\frac{1}{1-\varepsilon}}\left(\int_{B_{R}(x_{0})}1+\snr{\mathrm{D}\vn}^{p} \ dx\right)^{\frac{\lambda\varepsilon}{1-\varepsilon}}, \quad h(r)=\int_{B_{r}(x_{0})}1+\snr{\mathrm{D}\vn}^{q\delta} \ dx,
\end{flalign*}
therefore we use \eqref{e24} for the values $\rho \le r<d \le R-\nu$. This choice is possible because of \eqref{e19}, so $h(r)$ is bounded over $[\rho, R-\nu]$. It follows that
\begin{flalign}\label{e25}
\int_{B_{\rho}(x_{0})}\snr{\mathrm{D}\vn}^{\frac{np}{n-2\tau}} \ dx &\le \frac{c\tilde{m}^{\frac{1}{(1-\varepsilon)}}}{(R-\nu-\rho)^{\frac{a}{1-\varepsilon}}}\left(\int_{B_{R}(x_{0})}(1+\snr{\mathrm{D}\vn})^{p} \ dx\right)^{\frac{\lambda \varepsilon}{1-\varepsilon}},
\end{flalign}
with $c=c(n,N,p,q, \alpha, \tau, C)$ and $\tilde{m}$ is as in \eqref{el29}. Since both $c$ and $\tilde{m}$ do not depend on $\nu$, we let $\nu \to 0$, so \eqref{e25} is still true for $\sigma=0$. 
Now define
\begin{flalign}\label{el29}
\tilde{a}=\frac{a}{1-\varepsilon}, \quad \beta=\frac{\lambda\varepsilon}{1-\varepsilon}, \quad  \tilde{\beta}=\beta+\frac{2n}{(n-2\tau)(1-\varepsilon)}.
\end{flalign}
Fixing $\tau$ in such a way that $t<\frac{np}{n-2\tau}$, from \eqref{e25} and \eqref{el32} we get
\begin{flalign*}
\int_{B_{\rho}(x_{0})}\snr{\mathrm{D}v_{j}}^{t} \ dx\le \frac{c\tilde{m}^{\frac{1}{(1-\varepsilon)}}}{(R-\rho)^{\tilde{\alpha}}}\left(\int_{B_{R}(x_{0})}f(x,\mathrm{D}u_{j})+\frac{1}{p}\snr{v_{j}-u}^{p}+1\ dx+\varepsilon_{j}\int_{B_{R}(x_{0})}(1+\snr{\dd v_{j}}^{2})^{q/2} \ dx\right)^{ \ \beta},
\end{flalign*}
where $c$ has the aforementioned dependencies.\\
As already pointed out in \eqref{el26}, the sequence $(v_{j})_{j \in \N}$ is uniformly bounded in $W^{1,p}(B_{R}(x_{0}), \SN)$, so, there exists $w \in W^{1,p}_{u}(B_{R}(x_{0}),\SN)$ such that (up to extract a subsequence) $\dd v_{j}\rightharpoonup_{j \nearrow \infty} \dd w$ weakly in $L^{p}(B_{R}(x_{0}),\mathbb{R}^{N\times n})$. By weak lower semicontinuity, we get
\begin{flalign}\label{e27}
\int_{B_{\rho}(x_{0})}\snr{\mathrm{D}w}^{t} \ dx&\le \liminf_{j \nearrow \infty}\int_{B_{\rho}(x_{0})}\snr{\mathrm{D}\vn}^{t} \ dx\nonumber\\
&\le \liminf_{j \nearrow \infty}\frac{c\tilde{m}^{\frac{1}{(1-\varepsilon)}}}{(R-\rho)^{\tilde{\alpha}}}\left(\int_{B_{R}(x_{0})}\left(f(x,\mathrm{D}\vn)+1\right)+\frac{1}{p}\snr{\vn-u}^{p} \ dx+\varepsilon_{j}\int_{B_{R}(x_{0})}(1+\snr{\mathrm{D}\vn}^{2})^{\frac{q}{2}}\ dx\right)^{ \ \beta}\nonumber \\
&\le \liminf_{j \nearrow \infty}\frac{c\tilde{m}^{\frac{1}{(1-\varepsilon)}}}{(R-\rho)^{\tilde{\alpha}}}\left(\int_{B_{R}(x_{0})}\left(f(x,\mathrm{D}\bar{v}_{j})+1\right)+\frac{1}{p}\snr{\bar{v}_{j}-u}^{p} \ dx+\varepsilon_{j}\int_{B_{R}(x_{0})}(1+\snr{\mathrm{D}\bar{v}_{j}}^{2})^{\frac{q}{2}}\ dx\right)^{ \ \beta}\nonumber \\
&=\frac{c}{(R-\rho)^{\tilde{\alpha}}}\left(\int_{B_{R}(x_{0})}f(x,\mathrm{D}u)\ dx+1\right)^{\ \tilde{\beta}},
\end{flalign}
where $c=c(n,N,p,q,\tau,\alpha), \tilde{\alpha}$, $\beta$ and $\tilde{\beta}$ are as in \eqref{el29} and we used the explicit expression of $\tilde{m}$. Moreover
\begin{flalign}\label{e28}
\int_{B_{R}(x_{0})}f(x,\mathrm{D}w) \ dx &\le \int_{B_{R}(x_{0})}f(x,\mathrm{D}w)+\frac{1}{p}\snr{w-u}^{p} \ dx \nonumber \\ 
&\le \liminf_{j \nearrow \infty} \int_{B_{R}(x_{0})}f(x,\mathrm{D}\vn)+\frac{1}{p}\snr{\vn-u}^{p} \ dx+\varepsilon_{j}\int_{B_{R}(x_{0})}(1+\snr{\mathrm{D}\vn}^{2})^{\frac{q}{2}}\ dx\nonumber \\
&\le \liminf_{j \nearrow \infty}\int_{B_{R}(x_{0})}f(x,\mathrm{D}\bar{v}_{j})+\frac{1}{p}\snr{\bar{v}_{j}-u}^{p} \ dx+\varepsilon_{j}\int_{B_{R}(x_{0})}(1+\snr{\mathrm{D}\bar{v}_{j}}^{2})^{\frac{q}{2}}\ dx\nonumber\\
&\le \liminf_{j \nearrow \infty}\int_{B_{R}(x_{0})}f(x,\mathrm{D}\bar{v}_{j}) \ dx +o(j) =\int_{B_{R}(x_{0})}f(x,\mathrm{D}u) \ dx.
\end{flalign}
We stress that, in particular, $w \in W^{1,p}_{u}(B_{R}(x_{0}),\mathbb{S}^{N-1})$, thus the minimality of $u$ gives
\begin{flalign}\label{e29}
\int_{B_{R}(x_{0})}f(x,\mathrm{D}w) \ dx \ge \int_{B_{R}(x_{0})}f(x,\mathrm{D}u) \ dx.
\end{flalign}
From \eqref{e28} and \eqref{e29} we get
\begin{flalign*}
\int_{B_{R}(x_{0})}f(x,\mathrm{D}u) \ dx \le \int_{B_{R}(x_{0})}f(x,\mathrm{D}w) \ dx \le \int_{B_{R}(x_{0})}f(x,\mathrm{D}w)+\frac{1}{p}\snr{w-u}^{p} \ dx \le \int_{B_{R}(x_{0})}f(x,\mathrm{D}u) \ dx,
\end{flalign*}
so
\begin{flalign*}
\int_{B_{R}(x_{0})}f(x,\mathrm{D}u) \ dx =\int_{B_{R}(x_{0})}f(x,\mathrm{D}w) \ dx \quad \mathrm{and} \quad \int_{B_{R}(x_{0})}\snr{w-u}^{p} \ dx =0,
\end{flalign*}
and the higher integrability of $\mathrm{D}u$ is proved.
\end{proof}

\begin{remark}
\emph{The hypotheses considered in the previous Theorem can be sometimes restrictive, and, in particular, do not cover the model energies ($\mathrm{E}_{1}$), ($\mathrm{E}_{3}$) and ($\mathrm{E}_{4}$). However we can fix this issue by replacing (H3) by any of the following
\begin{flalign*}
&(\mathrm{H3.1}) \quad C^{-1}(\mu^{2}+\snr{\zeta_{1}}^{2}+\snr{\zeta_{2}}^{2})^{\frac{p(x)-2}{2}}\snr{\zeta_{1}-\zeta_{2}}^{2}\le (f'(x,\zeta_{1})-f'(x,\zeta_{2}),\zeta_{1}-\zeta_{2}),\\
&(\mathrm{H3.2}) \quad C^{-1}\sum_{i=1}^{n}(\mu_{i}^{2}+\snr{\zeta_{1,i}}^{2}+\snr{\zeta_{2,i}}^{2})^{\frac{p_{i}-2}{2}}\snr{\zeta_{1,i}-\zeta_{2,i}}^{2}\le (f'(x,\zeta_{1})-f'(x,\zeta_{2}),\zeta_{1}-\zeta_{2}),
\end{flalign*}
where $\zeta_{1}$, $\zeta_{2}\in \mathbb{R}^{N\times n}$, $x \in \Omega$, $\mu, \mu_{i}\in [0,1]$ and $\zeta_{1,i}=(\zeta_{1,i}^{k})_{k \in \{1,\cdots, N\}}$, $\zeta_{2,i}=(\zeta_{2,i}^{k})_{k \in \{1,\cdots, N\}}\in \mathbb{R}^{N}$.\\
Since (H3.1) and (H3.2) score only the coercivity condition, we just need to modify \eqref{e1} as done in \cite{b15}, Theorem 5, to conclude that the result in Theorem \ref{T1} remains true also for ($\mathrm{E}_{1}$), ($\mathrm{E}_{3}$) and ($\mathrm{E}_{4}$).
}
\end{remark}

\begin{corollary}\label{C2}
Let $u\in W^{1,p}_{\mathrm{loc}}(\Omega, \SN)$ be a local minimizer of $\mathcal{F}$, with $\mathcal{F}$ belonging to classes ($\mathcal{A}$), ($\mathcal{B}$) or ($\mathcal{C}$). Then $u \in W^{1,q}_{\mathrm{loc}}(\Omega,\mathbb{S}^{N-1})$.
\end{corollary}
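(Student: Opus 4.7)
The plan is a direct appeal to Theorem~\ref{T1}: the only work required is to check that $q$ itself lies in the range of admissible exponents $\bigl(1,\tfrac{np}{n-\alpha}\bigr)$ for which that theorem produces higher integrability, and then to specialise $t=q$.

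First I would rewrite assumption~\eqref{I2} in the form $q<p(n+\alpha)/n$. Next I would compare this with the threshold $np/(n-\alpha)$ appearing in Theorem~\ref{T1}. A one-line computation
\[
\frac{np}{n-\alpha}-\frac{p(n+\alpha)}{n}=\frac{p\bigl(n^{2}-(n-\alpha)(n+\alpha)\bigr)}{n(n-\alpha)}=\frac{p\,\alpha^{2}}{n(n-\alpha)}>0,
\]
valid because $\alpha=\min(\sigma,\kappa)\le 1<2\le n$ and $p>0$, shows that $p(n+\alpha)/n<np/(n-\alpha)$. Combining the two inequalities yields $q<np/(n-\alpha)$, so $q$ is an admissible value of $t$ in the statement of Theorem~\ref{T1}.

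Choosing $t=q$ in Theorem~\ref{T1} therefore gives $u\in W^{1,q}_{\mathrm{loc}}(\Omega,\SN)$, accompanied by the quantitative bound
\[
\int_{B_{\rho}(x_{0})}\snr{\mathrm{D}u}^{q}\,dx\le \frac{c}{(R-\rho)^{\tilde{\alpha}}}\left(1+\int_{B_{R}(x_{0})}f(x,\mathrm{D}u)\,dx\right)^{\tilde{\beta}},
\]
for every $B_{R}(x_{0})\subset\subset\Omega$ and $0<\rho<R$. I do not anticipate any genuine obstacle: the analytic core, consisting of the extension scheme of Corollary~\ref{C4}, the Euler--Lagrange equation~\eqref{EL}, the difference-quotient estimate~\eqref{el10} and the Gehring--Giaquinta--Modica-type iteration culminating in~\eqref{e25}, has already been carried out in the proof of Theorem~\ref{T1}. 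The corollary is simply the observation that the exponent gap assumption~\eqref{I2} is precisely what makes $q$ itself lie below the critical threshold $np/(n-\alpha)$.
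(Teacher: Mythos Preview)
Your proposal is correct and follows essentially the same route as the paper: apply Theorem~\ref{T1}, and verify via the inequality $\frac{np}{n-\alpha}>p\frac{n+\alpha}{n}$ together with~\eqref{I2} that $q$ falls in the admissible range for $t$. Your explicit computation $\frac{np}{n-\alpha}-\frac{p(n+\alpha)}{n}=\frac{p\alpha^{2}}{n(n-\alpha)}>0$ is just a more detailed version of what the paper calls ``a direct computation''.
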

\begin{proof}
Theorem \ref{T1} yields that $\mathrm{D}u \in L^{t}_{\mathrm{loc}}(\Omega,\mathbb{R}^{N\times n})$ for any $t\in \left(1,\frac{np}{n-\alpha}\right)$. By assumption, we know that $\frac{q}{p}<\frac{n+\alpha}{n}$. A direct computation shows that $\frac{np}{n-\alpha}>p\frac{n+\alpha}{n}$, so, in particular $\mathrm{D}u \in L^{q}_{\mathrm{loc}}(\Omega,\mathbb{R}^{N\times n})$.
\end{proof}

\begin{remark}\label{L69}
\emph{For $\mathbb{S}^{1}$-valued maps, the whole analysis can be considerably simplified in presence of a radial structure. \\ Precisely,} \emph{let $\Omega \subset \mathbb{R}^{n}$ be simply connected and }
\begin{flalign*}
F\colon \Omega \times \mathbb{R}^{2\times n}\to \mathbb{R}\cup \{\infty \}, \quad F(x,\zeta)=f(x,\snr{\zeta}^{2})
\end{flalign*}
\emph{is a normal, convex integrand satisfying, for some exponent} $p \in [2,\infty)$, \emph{the coercivity condition}
\begin{flalign*}
F(x,\zeta ) \ge \snr{\zeta}^{p}
\end{flalign*}
\emph{for all }$\zeta \in \mathbb{R}^{2\times n}$ \emph{and a. e.} $x \in \Omega$. \\
\emph{We consider the corresponding variational integral}
\begin{flalign}\label{vp69}
\mathcal{F}(u,\Omega)=\int_{\Omega}f(x,\snr{\dd w}^{2}) \ dx,
\end{flalign}
\emph{with $w\in W^{1,p}(\Omega, \mathbb{S}^{1})$. Fix $g\in W^{1,p}(\Omega, \mathbb{S}^{1})$ so that $F(\cdot , \dd g(\cdot))\in L^{1}(\Omega)$. Using the direct method it is easy to prove the existence of a minimizer of \eqref{vp69},  $\bar{u}\in W^{1,p}_{g}(\Omega, \mathbb{S}^{1})$.\\
Notice that, as well known from the Lifting Theorem, \cite{b4, b5, b34}, we can find $\bar{\varphi}\in W^{1,p}(\Omega)$ so that $\bar{u}=e^{i\bar{\varphi}}$ (with the obvious identification $\mathbb{C}\cong \mathbb{R}^{2}$) and from the trace theorem it follows that $\left.g\right |_{\partial \Omega}=\left.e^{i\bar{\varphi}}\right |_{\partial \Omega}$. Now, if $\varphi \in W^{1,p}_{\bar{\varphi}}(\Omega)$, then clearly $e^{i\varphi}\in W^{1,p}_{g}(\Omega, \mathbb{S}^{1})$, and since
\begin{flalign*}
\snr{\dd (e^{i\varphi})}=\snr{\dd \varphi},
\end{flalign*}
we deduce that $f(x,\snr{\dd \bar{\varphi}}^{2})=F(x,\dd\bar{u})\in L^{1}(\Omega)$ and 
\begin{flalign*}
\int_{\Omega}f(x,\snr{\dd \bar{\varphi}}^{2}) \ dx\le \int_{\Omega}f(x,\snr{\dd \varphi}^{2}) \ dx
\end{flalign*}
for all $\varphi \in W^{1,p}_{\bar{\varphi}}(\Omega)$. Thus the variational problem for \eqref{vp69} reduces to a variational problem for real-valued functions, and we can therefore use the much stronger results available in the scalar, radial case.\\
}
\emph{Such an approach is very interesting, since it allows tracing a vectorial variational problem back to a scalar one, and, by common sense, scalar minimizers have more hopes for higher regularity than their vectorial "counterpart".\\
For instance, we may apply these ideas on the model energy for Double Phase Variational Integral
\begin{flalign*}
(\mathrm{E}_{2}) \quad \mathcal{F}(u,\Omega)=\int_{\Omega}\snr{\dd u}^{p}+a(x)\snr{\dd u}^{q} \ dx.
\end{flalign*}
Let us recall that their regularity theory is now very well-understood in the unconstrained case, see \cite{b50, b51, b10}. In particular, by Theorem 1.4 in \cite{b10} we obtain that $u \in C^{1,\beta}_{\mathrm{loc}}(\Omega, \mathbb{S}^{1})$.\\
We conclude by stressing that, if $h\colon \Omega\times \mathbb{R}^{2\times n}\to \mathbb{R}$ is a Carath\'eodory integrand such that
\begin{flalign*}
c^{-1}f(x,\snr{\zeta}^{2})\le h(x,\zeta)\le cf(x,\snr{\zeta}^{2}), \quad c>1,
\end{flalign*}
and $u=e^{i\bar{\varphi}}$, is any minimizer of the functional
\begin{flalign*}
\mathcal{H}(w, \Omega)=\int_{\Omega}h(x,\dd w(x)) \ dx, \quad w\colon \Omega \to \mathbb{S}^{1},
\end{flalign*}
then $\bar{\varphi}$ is an unconstrained quasi-minimizer of $\mathcal{F}(w,\Omega)=\int_{\Omega}f(x,\snr{\dd w}^{2}) \ dx$. In fact, as above,
\begin{flalign*}
\int_{\Omega}f(x, \snr{\dd \bar{\varphi}}^{2}) \ dx&=\int_{\Omega}f(x, \snr{\dd \bar{u}}^{2}) \ dx\\
&\le c\int_{\Omega}h(x, \dd \bar{u}) \ dx\\
&\le c\int_{\Omega}h(x, \dd w) \ dx\le c^{2}\int_{\Omega}f(x, \snr{\dd \varphi}^{2}) \ dx,
\end{flalign*}
for all $\varphi \in W^{1,p}_{\bar{\varphi}}(\Omega)$.\\
A detailed analysis on this matter will be the object of a forthcoming paper, \cite{bluff}.
}
\end{remark}

\textbf{Acknowledgments.} The author wishes to thank Prof. J. Kristensen for the many discussions and suggestions during the completion of this paper. Further thanks are extended to the anonymous reviewers, for their careful reading of the manuscript and their useful comments.



    
    
    
    

\end{document}